\newcommand{\br}{\mathbb R}
\newcommand{\bn}{\mathbb N}
\newcommand{\ep}{\epsilon}
\newcommand{\al}{\alpha}
\newcommand{\ssm}{\smallsetminus}
\newcommand{\co}{\colon\thinspace}
\DeclareMathOperator{\arcsinh}{arcsinh}
\theoremstyle{plain}
\newtheorem*{ThmA}{Theorem~\ref*{thm:regular}}
\newtheorem*{ThmD}{Theorem~\ref*{thm:qch}}
\newtheorem*{ThmE}{Theorem~\ref*{thm:arcs}}
\newtheorem{thm}{Theorem}[section]
\newtheorem{cor}[thm]{Corollary}
\newtheorem{prop}[thm]{Proposition}
\newtheorem{lem}[thm]{Lemma}
\newtheorem{rem}[thm]{Remark}
\newtheorem{question}[thm]{Question}
\title{There are no exotic ladder surfaces}
\author{Ara Basmajian}
\address{(A. Basmajian) Department of Mathematics\\
The Graduate Center, City University of New York\\
365 Fifth Ave., New York, NY 10016 and
}
\address{
Department of Mathematics\\
Hunter College, City University of New York\\
695 Park Ave., New York, NY, 10065
}
\author{Nicholas G. Vlamis}
\address{(N.G. Vlamis) Department of Mathematics\\
Queens College, City University of New York\\
65-30 Kissena Blvd., Flushing, NY 11367, U.S.A.
}
\begin{document}

\begin{abstract}
It is an open problem to provide a characterization of quasiconformally homogeneous Riemann surfaces.
We show that given the current literature, this problem can be broken into four open cases with respect to the topology of the underlying surface.
The main result is a characterization in one of the these open cases; in particular, we prove that every quasiconformally homogeneous ladder surface is quasiconformally equivalent to a regular cover of a closed surface (or, in other words, there are no exotic ladder surfaces).
\end{abstract}

\maketitle


\section{Introduction}

A Riemann surface \( X \) is \( K \)-\emph{quasiconformally homogeneous}, or \( K \)-QCH, if given any two points \( x,y \in X \) there exists a \( K \)-quasiconformal homeomorphism \( f \co X \to X \) such that \( f(x) = y \). 
We say a Riemann surface is \emph{quasiconformally homogeneous}, or QCH, if it is \( K \)-QCH for some \( K \) (note: this definition diverges from the literature, where such a surface is usually referred to as \emph{uniformly} quasiconformally homogeneous). 
For a survey of the work on QCH surfaces see \cite{QCHSurvey}. 

For example, the Riemann sphere, the unit disk, and any Riemann surface whose universal cover is isomorphic to the complex plane are all 1-QCH, or \emph{conformally homogeneous}.
In fact, this is a complete characterization of conformally homogeneous Riemann surfaces, which leads us to the problem for which this paper is concerned:

\begin{quote}
\emph{Characterize all QCH Riemann surfaces.}
\end{quote}

Given the characterization of 1-QCH Riemann surfaces above, all the remaining cases to consider are hyperbolic Riemann surfaces (i.e.~Riemann surfaces whose universal cover is isomorphic to the unit disk).

The starting point for such a characterization comes from higher dimensions.
The notion of being \( K \)-QCH readily extends to the setting of hyperbolic manifolds of any dimension.
In dimension at least three, it was shown in \cite[Theorem 1.3]{BonfertQuasiconformal} that a hyperbolic manifold is QCH if and only if it is a (geometric) regular cover of a closed hyperbolic orbifold.
Naturally, such a result relies on rigidity phenomena in higher dimensions that do not occur in dimension two; in particular, as being QCH is invariant under quasiconformal deformations, it is not too difficult to find a hyperbolic QCH surface that does not regularly cover a closed hyperbolic orbifold (see \cite[Lemma 5.1]{BonfertQuasiconformal}).

This leads one to wonder---maybe naively---if every hyperbolic QCH surface is quasiconformally equivalent to a cover of a closed hyperbolic orbifold?
Interestingly, this is not the case: in \cite[Theorem 1.1]{BonfertExotic} the existence of \emph{quasiconformally exotic} QCH surfaces (i.e.~ QCH surfaces that are not quasiconformally equivalent to regular covers of closed orbifolds) is shown.
However, all the exotic QCH surfaces constructed in \cite{BonfertExotic} are homeomorphic; in particular, they are homeomorphic to the one-ended infinite-genus surface (affectionately referred to as the Loch Ness monster surface).

Our first theorem establishes that all QCH surfaces (and, in particular, exotic QCH surfaces) are topological regular covers of closed surfaces, or in other words,  there are no \emph{topologically exotic} QCH surfaces:

\begin{ThmA}
Every quasiconformally homogeneous Riemann surface topologically covers a closed surface.
\end{ThmA}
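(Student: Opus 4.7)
The strategy is to use the quasiconformal homogeneity hypothesis to constrain the topological invariants of \( X \) and then invoke a known topological characterization of surfaces that cover closed surfaces. By the Ker\'ekj\'art\'o classification, an orientable surface is determined up to homeomorphism by the triple \( (g, E(X), E_g(X)) \), where \( g \in \{0,1,2,\ldots\} \cup \{\infty\} \) is the genus, \( E(X) \) is the (compact, metrizable, totally disconnected) space of ends, and \( E_g(X) \subseteq E(X) \) is the subset of ends accumulated by genus; the plan is to pin down the homeomorphism class of this triple using the QCH hypothesis.

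Let \( G \) denote the group of \( K \)-quasiconformal self-homeomorphisms of \( X \), which by hypothesis acts transitively on \( X \). Every element of \( G \) extends uniquely to a homeomorphism of the Freudenthal compactification, inducing an action on \( E(X) \) preserving \( E_g(X) \). The first and main step is to promote the transitivity of \( G \) on points of \( X \) to transitivity of the induced action on \( E_g(X) \) and on \( E(X) \ssm E_g(X) \) separately. My plan here is a normal-families argument: given two ends \( e, e' \) of the same decoration type, choose sequences \( x_n \to e \) and \( y_n \to e' \) together with \( K \)-quasiconformal maps \( g_n \in G \) satisfying \( g_n(x_n) = y_n \); after lifting to the universal cover and normalizing, the uniform bound \( K \) together with standard compactness results for \( K \)-quasiconformal mappings should produce a subsequential limit that descends to a homeomorphism of \( X \) sending \( e \) to \( e' \).

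Granted this homogeneity of the decorated end space, a classical result asserts that a compact metrizable totally disconnected space with transitive homeomorphism group is either finite or homeomorphic to the Cantor set, so \( E(X) \) is of one of these two types. Transitivity also forces \( E_g(X) \) to be either empty or all of \( E(X) \); by local homogeneity of genus under \( G \), this in turn forces \( X \) to be planar in the former case and of infinite genus in the latter. In each of the resulting possibilities the decorated end space is self-similar, and one can invoke the characterization of surfaces that regularly cover closed surfaces in terms of self-similarity of the decorated end space (as developed in the recent work on big mapping class groups, e.g.\ by Aramayona--Patel--Vlamis or Mann--Rafi) to conclude that \( X \) is homeomorphic to such a cover. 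I expect the principal obstacle to lie in the first step above, since \( K \)-quasiconformal homeomorphisms need not extend continuously to the end compactification with any control coming solely from \( K \); the uniform bound must be leveraged carefully --- most naturally by passing to the universal cover and invoking a compactness argument there --- to produce limit homeomorphisms realizing the required end-to-end transfer.
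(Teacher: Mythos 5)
Your plan hinges on two steps, and both have genuine problems. First, the transitivity-on-ends step: the normal-families argument you sketch does not produce a self-homeomorphism of \( X \). To extract a convergent subsequence of the lifts \( \tilde g_n \) you must normalize at the escaping points \( x_n \) and \( y_n \) (normalizing at a fixed basepoint gives no control, since \( g_n(x_0) \) may also escape). But conjugating by the normalizing M\"obius maps replaces the deck group \( \Gamma \) by conjugates \( M_n \Gamma M_n^{-1} \), so the limit map is equivariant only with respect to a geometric limit of these groups; it is a quasiconformal map between pointed geometric limits of \( X \) based at \( x_n \) and at \( y_n \), not a map \( X \to X \), and there is no mechanism forcing a limit to carry the end \( e \) to \( e' \). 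You correctly flag this as the principal obstacle, but it is not a technicality to be "leveraged carefully" --- it is the step that fails. Second, even granting full homogeneity of the decorated end space, the conclusion does not follow. A compact totally disconnected metrizable space with transitive homeomorphism group may be finite of \emph{any} cardinality, and a planar surface with, say, three ends has a perfectly homogeneous decorated end space yet covers no closed surface; meanwhile the Mann--Rafi/APV notion of self-similarity does not characterize covers of closed surfaces (the two-point end space of the ladder and the annulus is not self-similar in that sense, yet both are regular covers). So the citation in your last step does not do the work you need, and nothing in your argument excludes surfaces with \( n \geq 3 \) isolated ends. (Also, transitivity on \( \mathcal E_{np} \) and on its complement \emph{separately} does not force one of them to be empty, as you assert.)

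The paper's proof avoids end-transitivity entirely and is worth comparing against. It argues by contradiction using only Lemma~\ref{lem:basics}(ii): a \( K \)-quasiconformal self-map is a \( (K, K\log 4) \)-quasi-isometry, so it moves a fixed compact subsurface to a subsurface of uniformly bounded diameter. Pushing a compact subsurface of positive genus deep into a planar end forces a genus handle inside a planar subsurface (ruling out surfaces with genus and a planar end), and pushing a pair of pants with three separating, end-bounding boundary curves deep into an isolated end forces the complement of an embedded pair of pants to have a bounded component (ruling out surfaces with at least three ends, one isolated). Combined with the classification of regular covers (Proposition~\ref{prop:cover classification}), this pins down the homeomorphism type with no limiting argument at all. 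If you want to salvage your outline, replace the transitivity step with this kind of "bounded displacement of compact subsurfaces" argument.
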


Note that every closed Riemann surface is QCH (see \cite[Proposition 2.4]{BonfertQuasiconformal} for a bound), so in the characterization of all QCH surfaces it is only left to consider non-compact surfaces. 
As a corollary to Theorem \ref{thm:regular}, we see that, up to homeomorphism, there are only a finite of number cases to consider.
In particular, combining Theorem \ref{thm:regular} with the classification of non-simply connected, infinite sheeted, regular covers of closed surfaces (Proposition \ref{prop:cover classification} below), we have:

\begin{cor}
\label{cor:top-exotic}
Up to homeomorphism, there are six non-compact  QCH Riemann surfaces, namely the plane, the annulus, the Cantor tree surface, the blooming Cantor tree surface, the Loch Ness monster surface, and the ladder surface\footnote{This nomenclature is explained in Proposition~\ref{prop:cover classification}}.
\end{cor}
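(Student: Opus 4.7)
The plan is to combine Theorem \ref{thm:regular} with Proposition \ref{prop:cover classification} directly. Let $X$ be a non-compact QCH Riemann surface. By Theorem \ref{thm:regular}, $X$ is a topological cover of some closed surface $S$; since $X$ is non-compact while $S$ is compact, this cover must be infinite-sheeted.

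I would then split into two cases depending on whether $X$ is simply connected. If $X$ is simply connected, then $X$ is the universal cover of $S$, and since $X$ is non-compact we may rule out $S = S^2$, leaving $X$ homeomorphic to the plane $\br^2$ in every remaining case (regardless of whether $S$ is Euclidean or hyperbolic). Otherwise, $X$ is a non-simply-connected, infinite-sheeted regular cover of a closed surface, and Proposition \ref{prop:cover classification} asserts that $X$ is homeomorphic to exactly one of the annulus, the Cantor tree surface, the blooming Cantor tree surface, the Loch Ness monster surface, or the ladder surface. Together with the plane, this yields at most the six homeomorphism types listed.

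To see that all six are actually realized by a QCH Riemann surface, the complex plane and the punctured plane are both $1$-QCH as noted in the introduction, realizing the plane and the annulus respectively. Each of the remaining four surfaces can be endowed with a hyperbolic structure making it a regular geometric cover of a closed hyperbolic surface, and the easy direction of \cite[Theorem 1.3]{BonfertQuasiconformal}, which remains valid in dimension two (isometries of the base lift to isometries of the cover), ensures that such a cover is automatically QCH. There is no genuine obstacle here: the substance is absorbed into Theorem \ref{thm:regular} and Proposition \ref{prop:cover classification}, and the only minor care needed is to peel off the simply-connected case separately, since the latter proposition is stated only for non-simply-connected covers.
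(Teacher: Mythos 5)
Your proposal is correct and follows essentially the same route as the paper, which simply combines Theorem~\ref{thm:regular} with Proposition~\ref{prop:cover classification} and notes that each of the six types is realized (the plane and punctured plane being $1$-QCH, and the remaining four admitting hyperbolic structures as regular covers of closed hyperbolic surfaces, which are QCH by \cite[Proposition 2.7]{BonfertQuasiconformal}). Your separate treatment of the simply connected case is harmless but unnecessary, since Proposition~\ref{prop:cover classification} as stated already includes $\br^2$ among its six non-compact possibilities.
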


As an immediate consequence of Corollary~\ref{cor:top-exotic}, we can strengthen a result of Kwakkel--Markovic \cite[Proposition 2.6]{KwakkelQuasiconformal}:

\begin{cor}
A Riemann surface of positive, finite genus is quasiconformally homogeneous  if and only if it is closed.
\end{cor}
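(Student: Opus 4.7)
The plan is to derive this corollary as an immediate topological consequence of the classification in Corollary~\ref{cor:top-exotic}. The ``if'' direction is already on record: every closed Riemann surface is QCH by \cite[Proposition 2.4]{BonfertQuasiconformal}, so a closed surface of positive, finite genus is QCH.

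For the ``only if'' direction, suppose $X$ is a QCH Riemann surface of positive, finite genus, and toward a contradiction assume $X$ is non-compact. Then Corollary~\ref{cor:top-exotic} forces $X$ to be homeomorphic to one of the six listed surfaces. I would then go down the list and read off the genus of each: the plane, the annulus, and the Cantor tree surface all have genus zero, while the blooming Cantor tree surface, the Loch Ness monster surface, and the ladder surface each have infinite genus. In particular, none of the six non-compact QCH surfaces has positive, finite genus, which contradicts our assumption on $X$. Hence $X$ must be compact, i.e.\ closed.

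The only real content is the inspection of the genera of the six model surfaces, and this is essentially definitional from the nomenclature introduced in Proposition~\ref{prop:cover classification} — there is no nontrivial estimate or construction. Accordingly, there is no genuine obstacle: the difficulty was entirely absorbed into Theorem~\ref{thm:regular} and its corollary. The point of the statement is simply to record that once the topological classification of QCH surfaces is known, the prior result of Kwakkel--Markovic~\cite[Proposition 2.6]{KwakkelQuasiconformal}, which ruled out QCH surfaces of positive, finite genus only under additional hypotheses, upgrades to an unconditional characterization in the positive finite genus regime.
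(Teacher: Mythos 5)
Your proposal is correct and is exactly the argument the paper intends: the corollary is stated as an immediate consequence of Corollary~\ref{cor:top-exotic}, with the only content being the observation that each of the six non-compact models has genus zero or infinite genus, plus the standing fact that closed surfaces are QCH. Nothing to add.
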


Consider the non-hyperbolic cases in Corollary~\ref{cor:top-exotic}: we know that (1) every Riemann surface homeomorphic to the plane is QCH and (2) that a Riemann surface homeomorphic to the annulus is QCH if and only if its universal cover is isomorphic to \( \mathbb C \) (this follows from the discussion of 1-QCH surfaces, Theorem~\ref{thm:regular}, and the fact that the fundamental group of a closed hyperbolic Riemann surface does not have a cyclic normal subgroup---this also follows from \cite[Theorem 1.1]{BonfertQuasiconformal}).
This leaves only four topological cases to consider.

In this article, we give a characterization in one of the four cases: the ladder surface, that is, the two-ended infinite-genus surface with no planar ends.
In this case, our main theorem shows that there are no exotic QCH ladder surfaces, yielding a complete classification of QCH ladder surfaces:

\begin{ThmD}
A hyperbolic ladder surface is quasiconformally homogeneous if and only if it is quasiconformally equivalent to a regular cover of a closed hyperbolic surface.
\end{ThmD}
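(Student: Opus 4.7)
The reverse implication is standard: every closed hyperbolic Riemann surface is QCH \cite[Prop.~2.4]{BonfertQuasiconformal}, the property lifts to regular covers via the deck group (which acts as conformal isometries), and it is preserved under quasiconformal equivalence. For the forward direction, let $X$ be a QCH hyperbolic ladder surface. Applying Theorem~\ref{thm:regular}, there is a topological regular covering $p \colon X \to Y$ with $Y$ closed; let $G$ be its deck group. Because $G$ is finitely generated (as a quotient of $\pi_1(Y)$) and acts freely, properly discontinuously, and cocompactly on the two-ended surface $X$, the group $G$ itself has two ends, hence is virtually infinite cyclic by Stallings' theorem. After replacing $Y$ by the corresponding finite cover, we may assume $G \cong \mathbb{Z}$ and $Y$ is an orientable closed surface of genus at least two. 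Endowing $Y$ with an arbitrary hyperbolic structure and pulling back along $p$ yields a hyperbolic Riemann surface $X'$, homeomorphic to $X$, on which $\mathbb{Z}$ acts freely, properly discontinuously, and cocompactly by conformal isometries. Thus $X'$ is a regular cover of a closed hyperbolic Riemann surface, and the theorem reduces to producing a quasiconformal homeomorphism $X \to X'$.

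The strategy is to build a uniformly quasiconformal $\mathbb{Z}$-action on $X$ topologically conjugate to the deck action on $X'$. By QCH, there exist $K$-quasiconformal self-maps of $X$ sending any given point to any other; fixing a basepoint $x_0 \in X$ and a sequence $\{x_n\}$ escaping toward one end along an axis picked out by the topological conjugacy with $X'$, we obtain $K$-QC self-maps $f_n \colon X \to X$ with $f_n(x_0) = x_n$. QCH surfaces have bounded geometry (injectivity radius bounded below), so a compactness argument in the compact-open topology produces a $K$-QC self-map $T$ of $X$ that, after combinatorial adjustments tied to $p$, is topologically conjugate to a generator of the deck $\mathbb{Z}$-action on $X'$. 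Then $\langle T\rangle$ is a free, properly discontinuous, cocompact group of uniformly $K$-QC homeomorphisms of $X$; the quotient $X/\langle T\rangle$ is a closed Riemann surface $Z$, the natural map $X \to Z$ is a QC regular cover, and pulling back the uniformization of $Z$ gives the desired QC equivalence between $X$ and a hyperbolic regular cover of a closed surface.

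The crux---and the main obstacle---is the passage from the QCH self-maps $f_n$ to a genuine translation $T$ with the right dynamics. A priori, a limit of the $f_n$ could have fixed points, fail to displace points uniformly, or be topologically unrelated to the deck action. Here the two-ended topology of the ladder is essential: any $K$-QC self-map of $X$ must permute the two-element end space, and bounded geometry together with the absence of planar ends forces a map that sends points far into one end to genuinely translate the surface rather than accumulate. This is precisely the rigidity that fails for the one-ended Loch Ness monster, where \cite{BonfertExotic} constructs exotic examples---so the proof must use the two-ended structure in an essential way. Making this limit argument rigorous, together with the combinatorial bookkeeping ensuring that $T$ realizes the generator of the topological $\mathbb{Z}$-action coming from $p$, is where the technical weight of the theorem lies.
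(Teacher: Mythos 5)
Your reverse direction and your reduction to ``find a quasiconformal homeomorphism from \(X\) to an explicit periodic model'' are fine, but the forward direction has a genuine gap, and in fact one of its asserted steps is false as stated. The heart of your argument is to extract, from the \(K\)-quasiconformal maps \(f_n\) supplied by homogeneity, a single self-map \(T\) generating a free, properly discontinuous, cocompact action, and you acknowledge that the limiting/dynamical argument producing \(T\) is not carried out; that is already the entire difficulty of the theorem, not a technicality. Worse, even granting such a \(T\), the claim that \(\langle T\rangle\) is a group of \emph{uniformly} \(K\)-quasiconformal homeomorphisms does not follow: the dilatation of \(T^n\) is a priori only bounded by \(K^{n}\), since quasiconformal constants multiply under composition. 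Without a uniform bound on the powers you cannot invoke a Sullivan--Tukia-type theorem to conjugate the action to a conformal one, and without conformality the complex structure on \(X\) does not descend to \(X/\langle T\rangle\), so the quotient is not naturally a closed Riemann surface. This is precisely the obstruction that makes the two-dimensional problem hard (and that the exotic one-ended examples exploit), so the argument cannot be completed along these lines without a substantial new idea.

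For contrast, the paper never constructs a group action on \(X\). It instead uses a distance-minimizing geodesic (Lemma~\ref{lem:two}) and the maps \(f_n\) to produce evenly spaced, uniformly short separating geodesics (Lemma~\ref{lem:special curves}), shows the complementary pieces have uniformly bounded area and hence bounded topology (Lemma~\ref{lem:area}), upgrades this to a Shiga pants decomposition in the standard ladder pattern (Lemmas~\ref{lem:short-pants} and~\ref{lem:special-shiga}), and then normalizes twist parameters so that all Fenchel--Nielsen coordinates of \(X\) are bounded above and below. A Fenchel--Nielsen quasiconformality criterion then gives a quasiconformal map from \(X\) directly to the explicit periodic surface \(Z\) with constant coordinates, which visibly regularly covers a closed genus-3 surface. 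You may want to compare your intended use of bounded geometry and the two-ended structure with this route: the pants-decomposition approach converts the homogeneity into uniform geometric bounds rather than into a group action, which is what sidesteps the uniform-dilatation problem above.
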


Given Theorem~\ref{thm:qch}, it is natural to ask if the distance (in the Teichm\"uller metric) of a \( K \)-QCH ladder surface from a regular cover can be explicitly bounded as a function of \( K \).
With this in mind, all of our proofs are written with the goal of providing explicit bounds in terms of \( K \) for all constants that appear; however, we are unable to do this in one location, namely the constant \( A \) appearing in Lemma~\ref{lem:area}.
It would be interesting to find such a bound.

The first step in the proof of Theorem \ref{thm:qch} is to choose a distance-minimizing geodesic (that is, a proper embedding of \( \mathbb R \) minimizing the distance between any two of its points); however, to do so, we need to know such a geodesic exists.
Our final theorem provides a sufficient topological condition for such a geodesic to exist; in addition, we show that there can be no topological condition that is both necessary and sufficient for a distance-minimizing geodesic to exist.

\begin{ThmE}
Every non-compact hyperbolic Riemann surface with at least two topological ends contains a distance-minimizing geodesic. 
Moreover, if an orientable, non-compact topological surface has a unique end, then it admits complete hyperbolic structures containing distance-minimizing geodesics as well as complete hyperbolic structures that do not admit such geodesics.
\end{ThmE}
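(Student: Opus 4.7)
The plan for the first assertion is a direct Arzel\`a--Ascoli argument. I fix distinct ends $e_1, e_2 \in X$ together with a compact separator $K \subset X$ whose complement contains disjoint neighborhoods of the two ends, choose sequences $x_n \to e_1$ and $y_n \to e_2$ that leave every compact set, and invoke Hopf--Rinow to get a distance-minimizing geodesic segment $\gamma_n$ from $x_n$ to $y_n$. Each $\gamma_n$ must meet $K$ at some point $p_n$, and after passing to a subsequence with $p_n \to p$ I reparametrize so that $\gamma_n(0) = p_n$. Since $d(p_n, x_n), d(p_n, y_n) \to \infty$, the intervals of definition exhaust $\mathbb{R}$, and Arzel\`a--Ascoli applied to the $1$-Lipschitz maps $\gamma_n$ yields a subsequential limit $\gamma \co \mathbb{R} \to X$ which is a geodesic. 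The identity $d(\gamma(s), \gamma(t)) = \lim_n d(\gamma_n(s), \gamma_n(t)) = |s-t|$ then forces $\gamma$ to be both injective and proper, so $\gamma$ is a distance-minimizing geodesic line.

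For the second assertion the interesting case is the Loch Ness monster surface $L$. To produce a hyperbolic structure on $L$ containing a distance-minimizing geodesic I would realize $L$ as a regular $\mathbb{Z}^2$-cover of a closed hyperbolic surface $\Sigma_g$ of genus $g \geq 2$, via an epimorphism $\pi_1(\Sigma_g) \twoheadrightarrow \mathbb{Z}^2$ factoring through $H_1(\Sigma_g; \mathbb{Z})$; such a cover is one-ended and of infinite genus. For a simple closed geodesic $\alpha \subset \Sigma_g$ whose homology class is primitive in $\mathbb{Z}^2$, the lift $\tilde\alpha \subset L$ is a bi-infinite geodesic. By choosing the hyperbolic structure on $\Sigma_g$ so that $\alpha$ is very short (so that its collar is very wide), I would argue, via a Dirichlet-domain computation on the universal cover $\mathbb{H}^2$, that the lift of $\tilde\alpha$ lies in a Dirichlet domain for the kernel $N = \ker(\pi_1(\Sigma_g) \to \mathbb{Z}^2)$ about each of its points, which implies that $\tilde\alpha$ is distance-minimizing in $L$.

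For a complete hyperbolic structure on $L$ admitting no distance-minimizing bi-infinite geodesic, I would build $L$ from a pair-of-pants decomposition whose dual graph is a ``broom'' (an infinite ray with handles attached at each vertex) and choose cuff lengths so that the unique end is a flaring region in which each handle provides an arbitrarily short homotopic shortcut. Given any bi-infinite geodesic $\gamma$, both of its rays must exit through the unique end, and so each crosses infinitely many cuff curves; a pigeonhole argument locates two points on $\gamma$ that are far apart in arc length but close across a handle, producing a strictly shorter path and contradicting distance-minimization. The main obstacle is the quantitative analysis: one needs the collar lemma and careful Fenchel--Nielsen estimates to guarantee that each handle shortcut actually beats the geodesic arc around it, and this is where I anticipate the technical difficulty to concentrate.
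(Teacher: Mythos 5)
Your proof of the first assertion is correct and is essentially the paper's argument (Lemma~\ref{lem:two}): the paper also takes minimizing segments between points escaping to two distinct ends, extracts a limit using compactness over a separating geodesic (via unit tangent vectors rather than Arzel\`a--Ascoli, and then verifies minimality of the limit by a tube argument, whereas your passage to the limit of the identities \( d(\gamma_n(s),\gamma_n(t))=|s-t| \) is a cleaner packaging of the same point). Your ``no distance-minimizing geodesic'' construction is also the right mechanism and matches the paper's Lemma~\ref{lem:one-no}: one only needs a nested sequence of separating geodesics of uniformly bounded length \( L \); since both rays of a proper geodesic must exit the unique end, the entry and exit points on the \( N \)-th separating curve are within \( L/2 \) of each other while the excursion between them grows without bound, so no Fenchel--Nielsen estimates or ``handle shortcuts'' are actually needed. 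Two smaller omissions: the theorem (in the form proved in the body, for non-planar one-ended surfaces) also requires the finite-positive-genus one-ended case, which the paper handles by making the planar end a cusp (no minimizing geodesic) or a funnel (minimizing geodesic); your proposal silently skips this.

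The genuine gap is in the ``yes'' direction for the Loch Ness monster, which is exactly where the paper does its real work (Lemma~\ref{lem:one-yes}). You assert that if \( \alpha\subset\Sigma_g \) is a short simple closed geodesic with \( \phi(\alpha) \) primitive in \( \mathbb Z^2 \), then its line lift \( \tilde\alpha \) to \( L=\mathbb H^2/\ker\phi \) is distance-minimizing, ``via a Dirichlet-domain computation.'' This is the crux, and it is not a routine computation: the competition comes from the entire coset \( a^n\ker\phi \), i.e.\ from closed geodesics \( \gamma \) on \( \Sigma_g \) with \( \phi_*[\gamma]=n\phi_*[\alpha] \) of length less than \( n\,\ell(\alpha) \), and for a generic metric such shortcuts do exist (this is the statement that \( \alpha \) need not realize the stable norm of its class modulo \( \ker\phi \)). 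Making \( \alpha \) short does not by itself control these competitors; what one actually needs is a quantitative statement such as: every loop with \( \phi_* \)-image \( n\phi_*[\alpha] \) has geometric intersection at least \( n \) with a dual curve \( \delta \), hence length at least \( 2n\,w(\ell(\delta)) \), and one must arrange \( \ell(\alpha)\le 2w(\ell(\delta)) \) --- together with a separate local argument (via the collar of \( \alpha \)) for nearby pairs of points. None of this is in your sketch, and ``lies in a Dirichlet domain about each of its points'' is a restatement of the conclusion rather than a proof. The paper avoids this entirely by building an explicit periodic surface \( R_b \) out of right-angled pentagons in which any path crossing \( n-1 \) of the ``horizontal'' geodesics has length at least \( 2nb \), so the minimizing property of the vertical geodesic reduces to an elementary crossing count. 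Your approach can likely be completed, but as written the key step is missing.
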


Despite the narrative arc of the results above, in what follows, the proofs of the theorems will appear in reverse order. 

\subsection*{Acknowledgements}

The authors are grateful to Richard Canary and Hugo Parlier for helpful conversations. 

This project began several years ago during a visit of the second author to the first (before they were at the same institution) that was funded by the GEAR Network and so: the second author acknowledges support from U.S. National Science Foundation grants DMS 1107452, 1107263, 1107367 "RNMS: GEometric structures And Representation varieties" (the GEAR Network). 
During that time the second author was a postdoc at the University of Michigan and supported in part by NSF RTG grant 1045119.
The second author is currently supported in part by PSC-CUNY Award \#62571-00 50 and \#63524-00 51. The first author is supported  by  a grant from the Simons foundation
(359956, A.B.).


\section{Preliminaries}

Every Riemann surface is Hausdorff, orientable, and second countable; hence we will require these attributes of all topological surfaces in this note. 

\subsection{Hyperbolic geometry}
We mention some facts in hyperbolic geometry that will be used in the sequel. 
For more detailed information, see \cite{BuserBook,HubbardBook,Vuorinen,Bridson}. 

A homeomorphism \( f \co U \to V \) between domains \( U \) and \( V \) in \( \mathbb C \) is \( K \)-quasiconformal if 
\[
\frac1K \rm{Mod}(A) \leq \rm{Mod}(f(A)) \leq K \rm{Mod}(A)
\]
for any annulus \( A \) in \( U \), where \( \rm{Mod} (A) \) is the modulus of \( A \), that is, the unique real positive number \( M \) such that \( A \) is isomorphic to \( \{z \in \mathbb C : 1 < |z| < e^{2\pi M} \} \).
To extend to Riemann surfaces, we say a homeomorphism \( f \co X \to Y \) is \( K \)-quasiconformal if the restriction to any chart is \( K \)-quasiconformal. 

A Riemann surface \( X \) is \emph{hyperbolic} if its universal cover is isomorphic to the unit disk \( \mathbb D \).
We can then realize \( X \) as the quotient of \( \mathbb D \) by the action of a Fuchsian group \( \Gamma \).

If we equip \( \mathbb D \) with its unique Riemannian metric of constant curvature -1, then \( \Gamma \) acts on \( \mathbb D \) by isometries and this metric descends to a metric on \( X \), which will generally denote by \( \rho \). 
Given a closed geodesic \( \gamma \) in a hyperbolic Riemann surface \( X \), we let \( \ell_X(\gamma) \) denote its length in \( (X, \rho) \).

We first recall some basic geometric properties of quasiconformal maps.
Before doing so, we require some notation.
Given two compact subsets \( C_1 \) and \( C_2 \) in a metric space \( (M,d) \), let \( d(C_1,C_2) \) denote the distance between the two subsets, that is, 
\[
d(C_1,C_2) = \min\{ d(x,y) : x \in C_1, y \in C_2 \},
\]
and let \( H(C_1, C_2) \) denote the \emph{Hausdorff distance} between \( C_1 \) and \( C_2 \), that is,
\[
H(C_1, C_2) = \max\{ \sup_{x \in C_1} \inf_{y\in C_2} d(x,y), \sup_{y\in C_2} \inf_{x\in C_1} d(y,x) \}.
\]

Finally, given two metric spaces \( (M_1,d_1) \) and \( (M_2, d_2) \), a surjection \( f\co M \to N \) is an \( (A,B) \)-quasi-isometry if 
\[
\frac1A d_1(x,y) - B \leq d_2(f(x), f(y)) \leq A d_1(x,y) + B
\]
for all \( x,y \in M_1 \).

\begin{lem}
\label{lem:basics}
Let \( Z \) be a hyperbolic Riemann surface and let \( \gamma \) be a simple closed geodesic in \(  Z \).
If \( f \co Z \to Z \) is \( K \)-quasiconformal, then
\begin{enumerate}[(i)]
\item \( \frac{1}{K} \ell_Z (\gamma) \leq  \ell_Z (f(\gamma)) \leq K \ell_Z (\gamma) \),

\item \( f \) is a \( (K,K\log4) \)-quasi-isometry, and

\item there exists a constant \( R \) depending only on \( K \) and the length of \( \gamma \) so that \( H(f(\gamma), \delta) < R \), where \( \delta \) is the geodesic homotopic to \( f(\gamma) \). 
\end{enumerate}
\end{lem}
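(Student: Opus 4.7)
The three assertions are standard consequences of classical quasiconformal distortion estimates, so my plan is primarily to identify, for each part, the right inequality to invoke and to keep track of how constants descend from the universal cover \(\mathbb D\) to \(Z\).

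For (i), I would prove the two-sided length bound via extremal length. The extremal length of a simple closed curve is \(K\)-quasi-invariant under \(K\)-quasiconformal maps, essentially by definition (applied to annular neighborhoods of \(\gamma\)). For a simple closed hyperbolic geodesic, the collar lemma gives a comparison between the hyperbolic length and the modulus of the maximal embedded annular neighborhood, which in turn is comparable to the extremal length. Combining these comparisons yields Wolpert's inequality \(\frac{1}{K}\ell_Z(\gamma) \le \ell_Z(\delta) \le K\,\ell_Z(\gamma)\). Since the notation \(\ell_Z\) has been defined only for closed geodesics, the symbol \(\ell_Z(f(\gamma))\) must be interpreted as the length of the geodesic representative of \(f(\gamma)\), which is \(\delta\); this is exactly what Wolpert's inequality controls.

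For (ii), I would lift \(f\) to a \(K\)-quasiconformal self-map \(\tilde f\) of the universal cover \(\mathbb D\) and invoke the standard Mori-type quasi-isometry inequality
\[
\rho_{\mathbb D}\bigl(\tilde f(\tilde x),\tilde f(\tilde y)\bigr) \le K\,\rho_{\mathbb D}(\tilde x,\tilde y) + K\log 4
\]
for \(K\)-quasiconformal self-maps of the hyperbolic disk (this follows from Mori's theorem together with the identity converting multiplicative distortion of Euclidean distance into additive distortion of hyperbolic distance). Choosing lifts \(\tilde x,\tilde y\) that realize \(\rho_Z(x,y)\) and projecting to \(Z\) gives the upper bound. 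Applying the same inequality to \(f^{-1}\), which is also \(K\)-quasiconformal, and using \(K\ge 1\) to absorb \(\log 4\) into \(K\log 4\), yields the lower bound in the required form \(\frac{1}{K}\rho_Z(x,y)-K\log 4\).

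For (iii), I would combine (ii) with the Morse lemma in the hyperbolic plane. Lift \(\gamma\) to a geodesic line \(\tilde\gamma\subset\mathbb D\); by (ii), \(\tilde f(\tilde\gamma)\) is a \((K,K\log 4)\)-quasi-geodesic in \(\mathbb D\). Since \(\tilde f\) extends to a (quasisymmetric) homeomorphism of \(\overline{\mathbb D}\), the curve \(\tilde f(\tilde\gamma)\) has well-defined distinct endpoints on \(\partial\mathbb D\), and the geodesic in \(\mathbb D\) joining those endpoints is a lift \(\tilde\delta\) of \(\delta\). The Morse lemma for quasi-geodesics in a \(\delta\)-hyperbolic space then gives a bound on the Hausdorff distance \(H(\tilde f(\tilde\gamma),\tilde\delta)\) in \(\mathbb D\) depending only on the quasi-isometry constants, and hence only on \(K\). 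Projecting to \(Z\) gives \(H(f(\gamma),\delta)<R\) for a constant depending only on \(K\) (so the statement of the lemma is stronger than necessary in allowing \(R\) to depend on \(\ell_Z(\gamma)\) as well).

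The main obstacle is not conceptual but bookkeeping: the article commits to explicit dependence of all constants on \(K\), so I would need to verify the precise value \(K\log 4\) in (ii) by tracing through Mori's inequality carefully, and to extract an explicit Morse constant in (iii) in terms of the hyperbolicity constant of \(\mathbb H^2\). Once these explicit forms are in hand, (i)--(iii) fall out directly.
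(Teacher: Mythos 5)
The paper does not actually prove this lemma: it is stated as a recollection of standard facts about quasiconformal maps, with the burden carried by the general references listed at the start of Section~2. So there is no ``paper's route'' to compare against, and your sketch supplies exactly the kind of argument being implicitly invoked. Parts (ii) and (iii) are correct as you describe them: lifting to \( \mathbb D \), Mori-type distortion gives the \( (K, K\log 4) \) quasi-isometry inequality (the exponent \( 1/(n-1) \) that appears in higher dimensions is trivial for \( n=2 \), so the constant really is \( K \)), and the Morse lemma applied to the quasi-geodesic \( \tilde f(\tilde\gamma) \) gives (iii); your observations that \( \ell_Z(f(\gamma)) \) must mean the length of the geodesic representative \( \delta \), and that \( R \) in (iii) depends on \( K \) alone, both match how the lemma is actually used later (e.g.\ the paper writes \( R = R(K) \) in Lemma~\ref{lem:special curves}). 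When projecting (iii) down from \( \mathbb D \), note only that the covering map is distance non-increasing and surjects \( \tilde f(\tilde\gamma) \) onto \( f(\gamma) \) and \( \tilde\delta \) onto \( \delta \), so the Hausdorff bound descends.

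The one step to repair is your route to (i). Passing through the \emph{maximal embedded collar} and asserting that hyperbolic length, the modulus of that collar, and extremal length are pairwise \emph{comparable} only yields \( \ell_Z(\delta) \leq C K \ell_Z(\gamma) \) for some constant \( C > 1 \); the comparison between extremal length and hyperbolic length is not uniformly multiplicative with constant \( 1 \), so this does not deliver the sharp two-sided bound with constant exactly \( K \) that the statement claims and that the later estimates (e.g.\ \( \ell_X(\gamma_n) \leq KL \) and \( \ell_X(\gamma_n) \geq L/K \)) rely on. The clean argument --- the standard proof of Wolpert's lemma --- replaces the embedded collar by the full annular cover \( A_\gamma \to Z \) associated to \( \langle \gamma \rangle \): in the paper's normalization its modulus is exactly \( \pi / \ell_Z(\gamma) \), the map \( f \) lifts to a \( K \)-quasiconformal homeomorphism \( A_\gamma \to A_{f(\gamma)} \), and \( K \)-quasi-invariance of the modulus gives \( \frac1K \ell_Z(\gamma) \leq \ell_Z(\delta) \leq K \ell_Z(\gamma) \) with no extraneous constants. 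Since the paper is explicitly committed to tracking exact dependence on \( K \), this substitution matters; with it, your proof of all three parts goes through.
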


Throughout our arguments, we will require the use of the collar lemma:

\begin{thm}
Let \( X \) be a hyperbolic Riemann surface and let \( \eta\co \mathbb R \to \mathbb R \) be given by 
\[ \eta(\ell) = \arcsinh\left(\frac{1}{\sinh(\ell/2)}\right).\]

If \( \gamma_1 \) and \( \gamma_2 \) are disjoint simple closed geodesics of length \( \ell_1 \) and \( \ell_2 \), respectively, then the \( \eta(\ell_i) \)-neighborhood of \( \gamma_i \), that is, the set
\[
A_{\eta(\ell_i)}(\gamma_i) = \{ x \in X : \rho(x, \gamma) < \eta(\ell) \}
\]
is embedded in \( X \) and \( A_{\eta(\ell_1)}(\gamma_1) \cap A_{\eta(\ell_2)}(\gamma_2) = \emptyset \). 
\end{thm}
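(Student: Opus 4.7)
The plan is to lift to the universal cover and convert both claims (embeddedness of each $A_{\eta(\ell_i)}(\gamma_i)$ and their disjointness) into distance estimates between geodesic lifts, which will then follow from the trace identity in $PSL(2,\mathbb R)$ combined with the torsion-freeness of the uniformizing group.

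Writing $X = \mathbb D/\Gamma$ for the Fuchsian group $\Gamma$ uniformizing $X$ (torsion-free, since $X$ is a Riemann surface), each $\gamma_i$ lifts to the axis $\tilde\gamma_i$ of a primitive hyperbolic element $g_i \in \Gamma$ of translation length $\ell_i$, and $A_{\eta(\ell_i)}(\gamma_i)$ is the quotient by $\langle g_i \rangle$ of the hyperbolic $\eta(\ell_i)$-neighborhood of $\tilde\gamma_i$ in $\mathbb D$. Embeddedness of $A_{\eta(\ell_i)}(\gamma_i)$ is then equivalent to $d(\tilde\gamma_i, h\tilde\gamma_i) \geq 2\eta(\ell_i)$ for every $h \in \Gamma$ with $h\tilde\gamma_i \neq \tilde\gamma_i$, while disjointness of the two collars amounts to $d(\tilde\gamma_1, \tilde\gamma_2) \geq \eta(\ell_1) + \eta(\ell_2)$ for any chosen lifts. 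Simplicity of each $\gamma_i$ together with $\gamma_1 \cap \gamma_2 = \emptyset$ forces all of the lifts in question to be pairwise disjoint geodesics in $\mathbb D$, so the perpendicular distance is well-defined and positive.

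The core calculation is the classical trace identity: for two hyperbolic isometries $g_\alpha, g_\beta$ with disjoint axes at perpendicular distance $d$ and translation lengths $L_\alpha, L_\beta$, a suitable orientation choice gives
\[\mathrm{tr}(g_\alpha g_\beta^{-1}) = 2\bigl[\cosh(L_\alpha/2)\cosh(L_\beta/2) - \sinh(L_\alpha/2)\sinh(L_\beta/2)\cosh(d)\bigr].\]
Since $\Gamma$ is torsion-free, the product $g_\alpha g_\beta^{-1}$ is trivial, parabolic, or hyperbolic, forcing $|\mathrm{tr}(g_\alpha g_\beta^{-1})| \geq 2$. The case $\mathrm{tr} \geq 2$ collapses to $\cosh(d) \leq 1$, impossible for disjoint axes; the remaining case $\mathrm{tr} \leq -2$ rearranges, using $\cosh(L_\alpha/2)\cosh(L_\beta/2) + 1 - \sinh(L_\alpha/2)\sinh(L_\beta/2) = 2\cosh^2((L_\alpha - L_\beta)/4)$, into
\[\sinh^2(d/2)\sinh(L_\alpha/2)\sinh(L_\beta/2) \geq \cosh^2\bigl((L_\alpha - L_\beta)/4\bigr),\]
and one checks via the half-angle identities and $\sinh(\eta(\ell)) = 1/\sinh(\ell/2)$ that the right-hand side is precisely $\sinh^2((\eta(L_\alpha) + \eta(L_\beta))/2)$, so the inequality is exactly $d \geq \eta(L_\alpha) + \eta(L_\beta)$. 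Specializing to two lifts of the same $\gamma_i$ (so $L_\alpha = L_\beta = \ell_i$) gives $d \geq 2\eta(\ell_i)$, yielding embeddedness; specializing to lifts of $\gamma_1$ and $\gamma_2$ gives the disjointness bound.

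I expect the main obstacle to be the verification of the trace identity itself, which I would do by an explicit matrix computation in the upper half-plane model---placing $\tilde\alpha$ as the imaginary axis and parameterizing $\tilde\beta$ by the Euclidean center and radius of its semicircular representative, then checking that $c/r = \cosh(d)$ where $d$ is the axis distance. The attending subtlety is the orientation convention: one must choose the sign in the trace formula for which the $\mathrm{tr} \leq -2$ case produces a \emph{lower} bound on $d$ rather than an upper bound. Once this convention is pinned down, the remainder of the argument is routine algebra.
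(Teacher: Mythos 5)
The paper itself does not prove this statement---it is quoted as the Collar Lemma from the literature---so the only question is whether your argument stands on its own. The embeddedness half does: there the two elements are conjugates of the same $g_i$, so $L_\alpha=L_\beta=\ell_i$, and in that case $\mathrm{tr}\geq 2$ genuinely forces $\cosh(d)\leq 1$; this is the classical trace-identity proof of the one-curve collar lemma, and your identities (including $\sinh^2\bigl(\tfrac{\eta(L_\alpha)+\eta(L_\beta)}{2}\bigr)\sinh(L_\alpha/2)\sinh(L_\beta/2)=\cosh^2\bigl(\tfrac{L_\alpha-L_\beta}{4}\bigr)$) check out. The gap is in the disjointness half, where $L_\alpha\neq L_\beta$ in general. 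The case $\mathrm{tr}(g_\alpha g_\beta^{-1})\geq 2$ does \emph{not} collapse to $\cosh(d)\leq 1$; it only gives
\[
\cosh(d)\;\leq\;\frac{\cosh(L_\alpha/2)\cosh(L_\beta/2)-1}{\sinh(L_\alpha/2)\sinh(L_\beta/2)},
\]
and the right-hand side equals $1$ exactly when $L_\alpha=L_\beta$ and is strictly greater than $1$ otherwise (its numerator exceeds its denominator by $\cosh((L_\alpha-L_\beta)/2)-1$). So for unequal lengths there is a nonempty window of distances $d>0$ compatible with $\mathrm{tr}\geq 2$, and your case analysis does not close.

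This is not a fixable bookkeeping issue, because the purely local statement your argument would establish---that any two hyperbolic elements of a torsion-free Fuchsian group with disjoint axes have axes at distance at least $\eta(L_\alpha)+\eta(L_\beta)$---is false. Take $\gamma_1$ simple of length $\epsilon$ and let $\beta$ be the closed geodesic freely homotopic to $(\alpha^+)^2\alpha^-$ in the surface cut along $\gamma_1$: it is disjoint from $\gamma_1$ but wraps twice around it, and the Fermi-coordinate computation in the annulus $\mathbb H/\langle z\mapsto e^{\epsilon}z\rangle$ shows its closest approach $d$ to $\gamma_1$ satisfies $\cosh(d)\approx\coth(\epsilon)<\coth(\epsilon/2)=\cosh(\eta(\epsilon))$, so $\beta$ enters the $\eta(\epsilon)$-collar of $\gamma_1$. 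For that pair of axes the anti-coherent product necessarily has trace $\geq 2$, so the case you dismiss really occurs. The only way you use the hypotheses is to conclude that the relevant lifts are pairwise disjoint, and that holds in this example too; hence the simplicity of \emph{both} curves must enter more substantially. The standard route (Buser) is to extend $\{\gamma_1,\gamma_2\}$ to a pants decomposition and read $\sinh(\ell_1/2)\sinh(\ell_2/2)\cosh(d)=\cosh(\ell_3/2)+\cosh(\ell_1/2)\cosh(\ell_2/2)\geq 1+\cosh(\ell_1/2)\cosh(\ell_2/2)=\sinh(\ell_1/2)\sinh(\ell_2/2)\cosh(\eta(\ell_1)+\eta(\ell_2))$ off the right-angled hexagon identity in each pair of pants; you would need an input of that kind to finish the disjointness claim.
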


We end with a special property of compact hyperbolic surfaces with totally geodesic boundary---that is, a compact surface arising as the quotient  of a countable intersection of pairwise-disjoint closed half planes in \( \mathbb D \) by the action of a Fuchsian group. 
In a hyperbolic surface with totally geodesic boundary, an \emph{orthogeodesic} is geodesic arc whose end points meet the boundary of the surface orthogonally. 

A \emph{pants decomposition} of a topological surface is a collection of pairwise-disjoint simple closed curves, called the \emph{cuffs}, such that each complementary component of their union is homeomorphic to a thrice-punctured sphere.  
Every orientable topological surface with non-abelian fundamental group has a pants decomposition and, in a compact hyperbolic surface (possibly with boundary), there always exists a pants decomposition where the curves are of bounded length, with the bound depending only on the topology of the surface and the length of its boundary.

\begin{thm}[Bers pants decomposition theorem]
Given positive real numbers \( A \) and \( L \), there exists a positive real number \( B \)---depending only on \(  A \) and \( L \)---such that every compact hyperbolic surface with totally geodesic boundary whose boundary length is less than \( L \) and whose area is less than \( A \) admits a pants decomposition with cuff lengths bounded above by \( B \). 
\end{thm}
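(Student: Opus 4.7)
The plan is to argue by induction on the topological complexity of the surface. Write \(S\) for the compact hyperbolic surface with totally geodesic boundary, of genus \(g\), with \(n\) boundary components, area \(\leq A\) and boundary length \(\leq L\). Because the boundary is totally geodesic, Gauss--Bonnet gives \(\mathrm{Area}(S) = 2\pi(2g+n-2)\), so the area bound \(A\) at once bounds both \(g\) and \(n\), and hence the number \(3g+n-3\) of cuffs in any pants decomposition. This reduces the problem to producing, for each non--pair-of-pants \(S\), a single simple closed geodesic in the interior of bounded length and then cutting along it; the induction hypothesis handles each resulting component (whose area is still at most \(A\) and whose boundary length is at most the previous boundary length plus twice the length of the cut curve, hence bounded by a function of \(A\) and \(L\)).

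The heart of the argument is therefore the following key lemma: there is a constant \(B_0 = B_0(A,L)\) such that any \(S\) as above which is not a pair of pants contains an essential simple closed geodesic in its interior, not freely homotopic to a boundary component, of length at most \(B_0\). I would prove this by a standard injectivity-radius/area dichotomy. Let \(\eta\) be the collar function from the collar lemma; then the \(\eta(L)\)-neighborhood of \(\partial S\) is an embedded union of annular collars, and I would show that its complement is non-empty (otherwise \(S\) itself has area bounded by \(L \cdot \cosh(\eta(L))\) and controlled topology, so one checks directly it is a pair of pants). At a point \(p\) in that complement, the open metric ball of radius \(r\) injects into the interior until \(r\) is large enough that the hyperbolic disk area \(2\pi(\cosh r - 1)\) exceeds \(A\); at that critical radius one obtains an essential geodesic loop based at \(p\) of length at most \(2\,\mathrm{arccosh}(A/(2\pi)+1)\), and a standard surgery converts this loop into a simple closed geodesic of no greater length. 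Because \(p\) lies outside the \(\eta(L)\)-collar of \(\partial S\), the collar lemma forces this geodesic not to be homotopic to any boundary component (otherwise it would sit inside the collar and have length exceeding \(L/\cosh(\eta(L))\), contradicting the bound when \(B_0\) is chosen appropriately).

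With the key lemma in hand, cutting \(S\) along the produced geodesic strictly decreases \(3g+n-3\) in each connected component (the genus drops by one in the non-separating case, and both pieces have strictly smaller complexity in the separating case), while the new boundary length and total area remain bounded by computable functions of \(A\) and \(L\). Iterating this procedure finitely many times terminates in a disjoint union of pairs of pants, and the collection of cut curves together with \(\partial S\) furnishes the desired pants decomposition with cuff lengths bounded by some \(B = B(A,L)\). The main obstacle is the key lemma: controlling the constant \(B_0\) requires a careful interplay between the collar lemma (to force the short curve off of the boundary) and the area argument (to force it to exist at all), and in particular one must verify that the lower bound on the distance from \(p\) to \(\partial S\) is consistent with the region outside the boundary collars being non-empty whenever \(S\) is not already a pair of pants.
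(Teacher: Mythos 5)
The paper does not actually prove this statement: it is quoted in the preliminaries as the classical Bers pants decomposition theorem (a known result, cf.\ Buser's book), so there is no in-paper argument to compare yours against, and I will assess the proposal on its own. Your overall architecture --- use Gauss--Bonnet to bound \( g \), \( n \), and hence the number of cuffs, find one interior essential simple closed geodesic of controlled length, cut, and induct, tracking how the boundary length grows --- is the standard shape of the proof and is fine in outline. The problems are concentrated in your key lemma.

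Two steps there fail. First, your point \( p \) is only guaranteed to lie at distance \( \eta(L) = \arcsinh(1/\sinh(L/2)) \) from \( \partial S \), which is in general much smaller than the critical radius \( r_0 = \mathrm{arccosh}(A/(2\pi)+1) \). The ball \( B(p,r) \) can therefore stop being an embedded hyperbolic disk simply because it runs into \( \partial S \), long before the area count forces a self-overlap, and in that case no geodesic loop is produced at all; nothing in your setup guarantees the existence of a point at distance \( r_0 \) from the boundary. Second, even when a short essential loop at \( p \) exists, your exclusion of boundary-parallel curves does not work: if some boundary component \( \beta \) has very small length \( \ell_\beta \), its collar has very large width \( \eta(\ell_\beta) \), the point \( p \) (only \( \eta(L) \) from \( \partial S \)) can sit deep inside that collar, and a loop at \( p \) freely homotopic to \( \beta \) has length roughly \( \ell_\beta \cosh(d(p,\beta)) \), which can be arbitrarily small; the claimed lower bound \( L/\cosh(\eta(L)) \) does not arise, and its geodesic representative is \( \beta \) itself, giving you no new cuff. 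The missing ingredient in both places is to work with short essential \emph{arcs} rather than only loops: grow the equidistant collars of \( \partial S \) (and of previously chosen cuffs) inward; since the area is at most \( A \) and the curve lengths are controlled, these collars must cease to be embedded and pairwise disjoint at some time \( t \) bounded in terms of \( A \) and the current boundary length, and the resulting tangency or self-tangency yields an essential geodesic arc of length at most \( 2t \) meeting the boundary orthogonally. The boundary of a regular neighborhood of that arc together with the boundary components it meets supplies the next cuff with controlled length; this collar-growing/arc mechanism is what makes the induction run and is what your proposal currently lacks.
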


\subsection{Topological ends}

The notion of an end of a topological space was introduced by Freudenthal and, in essence, encodes the topologically distinct ``directions" of going to infinity in a non-compact space.  

More formally, for a non-compact second-countable surface \( S \), fix an exhaustion \( \{K_n\}_{n\in \mathbb N} \) of \( S \) by compact sets so that for each \( n \in \mathbb N \),  \( K_n \) lies in the interior of \( K_{n+1} \) and such that each component of the complement of \( K_n \) is unbounded.
We then define a \emph{ (topological) end}  of \( S \) to be a sequence \(  e = \{ U_n \}_{n\in\mathbb N} \), where \( U_n \) is a complementary component of \( K_n \) and \( U_n \supset U_{n+1} \).

The \emph{space of ends} of \( S \), denoted \( \mathcal E(S) \), is the set of ends of \( S \) equipped with the topology generated by sets of the form \( \hat U_n = \{ e \in \mathcal E(S) : U_n \in e\} \).
It is an exercise to check that, up to homeomorphism, the definition of \( \mathcal E(S) \) given does not depend on the choice of compact exhaustion.
We will say that an open subset \( V \) of \( S \) with compact boundary is a \emph{neighborhood} of an end \( e = \{U_n\}_{n\in\mathbb N} \) if there exists \( N \in \bn \) such that \( U_N \subset V \).  

We say an end \( e = \{U_n\}_{n\in\mathbb N} \) is \emph{planar} if, for some \( N \in \bn \), \( U_N \) is planar (i.e. homeomorphic to a subset of \( \br^2 \)).
We denote the set of non-planar ends by \( \mathcal E_{np}(S) \), which is a closed subset of \( \mathcal E(S) \).
Note that \( \mathcal E_{np}(S) \) is non-empty if and only if \( S \) has infinite genus.

\begin{thm}[Classification of surfaces (see \cite{RichardsClassification})]
Two orientable surfaces without boundary, \( S_1 \) and \( S_2 \), of the same (possibly infinite) genus are homeomorphic if and only if there is a homeomorphism \( \mathcal E(S_1) \to \mathcal E(S_2) \) sending \( \mathcal E_{np}(S_1) \) onto \( \mathcal E_{np}(S_2) \).
\end{thm}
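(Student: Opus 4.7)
The plan is to prove both directions. The forward direction is immediate: any homeomorphism \( h \co S_1 \to S_2 \) induces a homeomorphism \( h_* \co \mathcal E(S_1) \to \mathcal E(S_2) \) (the end space is a topological invariant), and \( h_* \) carries \( \mathcal E_{np}(S_1) \) onto \( \mathcal E_{np}(S_2) \) because planarity of a neighborhood of an end is preserved by homeomorphism. The genera agree since genus is a topological invariant.

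For the converse, assume \( S_1 \) and \( S_2 \) have the same (possibly infinite) genus and that \( \phi \co \mathcal E(S_1) \to \mathcal E(S_2) \) is a homeomorphism carrying \( \mathcal E_{np}(S_1) \) onto \( \mathcal E_{np}(S_2) \). I would first construct \emph{compatible} compact exhaustions \( \{K_n^i\}_{n\in\bn} \) of \( S_i \) with the following properties: each \( K_n^i \) is a compact connected bordered subsurface whose complement has finitely many components, each corresponding to a clopen subset of \( \mathcal E(S_i) \); and \( \phi \) induces, at each level \( n \), a bijection between the components of \( S_1 \ssm K_n^1 \) and those of \( S_2 \ssm K_n^2 \) such that corresponding components carry the same (finite or infinite) genus and the same number of non-planar ends. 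Such exhaustions exist because \( \mathcal E(S_i) \) is compact, metrizable, and totally disconnected, so its topology has a basis of clopen sets; one refines the two exhaustions simultaneously, using \( \phi \) as a guide, and absorbs excess handles into larger compact pieces until the two decompositions align.

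With the compatible exhaustions in hand, the homeomorphism \( S_1 \to S_2 \) is built inductively by matching \emph{cobordisms}. At each step the piece \( L_n^i = \overline{K_{n+1}^i \ssm K_n^i} \) is a compact orientable surface with boundary of finite genus and finitely many boundary components; by the classification of compact orientable surfaces with boundary, two such are homeomorphic via a map prescribed on boundary components whenever their genera and boundary combinatorics agree. Having arranged this matching in the previous step, I construct homeomorphisms \( L_n^1 \to L_n^2 \) that agree with those already built on their common boundaries, and take the direct limit.

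The main obstacle is the bookkeeping for the compatible exhaustions. Although \( \phi \) matches non-planar ends in the limit, there is no reason the exhaustions as initially chosen will see this correspondence at any finite stage: a complementary component on \( S_1 \) might have extra genus or contain more non-planar ends than its intended counterpart on \( S_2 \). Handling this requires alternating refinements (the so-called back-and-forth argument), making essential use of the facts that non-planar ends are exactly the accumulation points of genus and that planar ends eventually lie in genus-zero complementary components. Once this correspondence is pinned down stage-by-stage, the direct limit of the piecewise homeomorphisms yields the desired global homeomorphism.
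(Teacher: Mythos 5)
The paper does not prove this statement---it is quoted from the cited reference---so there is no in-paper argument to compare against; your outline is precisely the standard Ker\'ekj\'art\'o--Richards proof from that source (compatible compact exhaustions aligned via a back-and-forth on clopen subsets of the end spaces, followed by inductive matching of the compact cobordisms \( L_n^i \) using the classification of compact surfaces with boundary). The plan is correct and identifies the right key facts---in particular that a complementary component with compact boundary has infinite genus exactly when it contains a non-planar end, and that in the finite-genus case all handles must be absorbed into the compact cores at some finite stage---though the back-and-forth construction of the compatible exhaustions, which is the real content of Richards' paper, is asserted rather than carried out.
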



\section{distance-minimizing geodesics and rays}

In a hyperbolic surface \( X \), a \emph{distance-minimizing geodesic} is a unit-speed geodesic curve \( \alpha \co  \br  \to X  \)  such that \( d_X(\gamma(a),\gamma(b)) = |b-a| \) for all \( a,b \in \mathbb R \).  
Recall that a map is \emph{proper} if the inverse image of a compact set is compact.

\begin{lem}
\label{lem:proper}
Every distance-minimizing geodesic in a hyperbolic Riemann surface is proper.
\end{lem}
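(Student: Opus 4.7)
The plan is to argue by contradiction, using only the definition of distance-minimizing and continuity of $\alpha$.

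Suppose $\alpha \co \mathbb{R} \to X$ is a distance-minimizing geodesic that fails to be proper. Then there exists a compact set $K \subset X$ such that $\alpha^{-1}(K)$ is not compact in $\mathbb{R}$. Since $\alpha$ is continuous and $K$ is closed, $\alpha^{-1}(K)$ is closed in $\mathbb{R}$; a closed subset of $\mathbb{R}$ fails to be compact precisely when it is unbounded. So I would extract a sequence $\{t_n\} \subset \alpha^{-1}(K)$ with $|t_n| \to \infty$, and after passing to a subsequence, arrange either $t_n \to +\infty$ or $t_n \to -\infty$.

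Since $\alpha(t_n) \in K$ and $K$ is compact (hence sequentially compact, as $X$ is a metric space), I would pass to a further subsequence so that $\alpha(t_n) \to p$ for some $p \in K$. Then for any $\varepsilon > 0$, I can find indices $n, m$ with $d_X(\alpha(t_n), p) < \varepsilon/2$ and $d_X(\alpha(t_m), p) < \varepsilon/2$, whence by the triangle inequality
\[
d_X(\alpha(t_n), \alpha(t_m)) < \varepsilon.
\]
On the other hand, because $|t_n| \to \infty$ along the subsequence, I may arrange for the same pair $n, m$ to satisfy $|t_n - t_m|$ to be arbitrarily large. But the distance-minimizing hypothesis gives $d_X(\alpha(t_n), \alpha(t_m)) = |t_n - t_m|$, producing the contradiction.

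There is no real obstacle here; the whole argument is a one-line consequence of combining the equality $d_X(\alpha(a),\alpha(b)) = |b-a|$ with sequential compactness of $K$. The only minor care needed is to ensure that after passing to subsequences to secure convergence of $\{\alpha(t_n)\}$, one can still find pairs of indices with $|t_n - t_m|$ arbitrarily large, which is immediate from $|t_n| \to \infty$.
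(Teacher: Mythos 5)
Your proof is correct and follows essentially the same route as the paper: both derive a contradiction from the fact that non-properness forces $\alpha^{-1}(K)$ to be unbounded while $K$ itself is bounded, which is incompatible with $d_X(\alpha(a),\alpha(b)) = |b-a|$. The only cosmetic difference is that you invoke sequential compactness of $K$ where the paper simply uses that $K$ has finite diameter.
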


\begin{proof}
Let \( X \) be a hyperbolic Riemann surface and let \( \alpha\co \mathbb R \to X \) be a continuous non-proper map.
Then, there exists a compact set  \( K \) such that \( \alpha^{-1}(K) \) is closed and not compact; in particular, \( \alpha^{-1}(K) \) is unbounded while \( K \) is bounded.
Hence, \( \alpha \) cannot be a distance-minimizing geodesic.  
\end{proof}

As an easy consequence of Lemma \ref{lem:proper}, no compact hyperbolic Riemann surface can have a distance-minimizing geodesic.
In Theorem \ref{thm:arcs}, we give a topologically sufficient condition for distance-minimizing geodesics to exist in a non-compact hyperbolic Riemann surface; however, in addition, we see that there cannot be a necessary topological condition for the existence of such a geodesic.

\begin{thm}
\label{thm:arcs}
Every non-compact hyperbolic Riemann surface with at least two topological ends contains a distance-minimizing geodesic. 
Moreover, if an orientable, non-compact non-planar topological surface has a unique end, then it admits complete hyperbolic structures containing distance-minimizing geodesics as well as complete hyperbolic structures that do not admit such geodesics.
\end{thm}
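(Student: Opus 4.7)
The plan is to treat the two assertions separately; the first is a standard compactness argument while the second requires producing two quite different hyperbolic structures on the same topological surface.

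For the multi-ended claim, I would fix distinct ends $e_1, e_2$ of $X$ and choose a compact set $K$ such that $X \setminus K$ has unbounded components $U_1, U_2$ that are neighborhoods of $e_1$ and $e_2$ respectively. Select sequences $\{x_n\} \subset U_1$ and $\{y_n\} \subset U_2$ escaping every compact set. The hyperbolic metric on $X$ is complete, so Hopf--Rinow supplies a minimizing geodesic segment $\gamma_n$ from $x_n$ to $y_n$; by connectivity, each $\gamma_n$ must cross the compact frontier $\partial U_1$ at some point $p_n$. Re-parameterize $\gamma_n$ by arc length with $\gamma_n(0) = p_n$, so that its domain $[a_n, b_n]$ satisfies $a_n \to -\infty$ and $b_n \to +\infty$. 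Using compactness of the unit tangent bundle over $\partial U_1$, pass to a subsequence with $(p_n, \gamma_n'(0)) \to (p, v)$. By completeness the limit geodesic $\alpha(t) = \exp_p(tv)$ is defined on all of $\mathbb{R}$; since each finite sub-arc of $\alpha$ is a uniform limit of length-minimizing segments of the $\gamma_n$, it too is length-minimizing, and so $\alpha$ is distance-minimizing.

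For the second claim, by the classification of surfaces, any non-planar non-compact one-ended orientable surface is homeomorphic to the Loch Ness monster $L$. To exhibit a hyperbolic structure on $L$ that \emph{does} admit a distance-minimizing geodesic, I would fix a closed hyperbolic surface $\Sigma_g$ of genus $g \geq 2$ together with a surjection $\rho \co \pi_1(\Sigma_g) \twoheadrightarrow \mathbb{Z}^2$ coming from abelianization. The corresponding regular cover is infinite sheeted with deck group $\mathbb{Z}^2$, a one-ended group, so it has a single topological end and infinite genus, hence is homeomorphic to $L$. Equip $L$ with the pulled-back hyperbolic structure. Picking a nonzero $v \in \mathbb{Z}^2$ and a simple closed geodesic $\gamma \subset \Sigma_g$ realizing the stable norm of $v$, a standard stable-norm argument shows that a lift of $\gamma$ in the cover is a bi-infinite geodesic along which the cover distance between any two points is exactly the arc length between them.

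To exhibit a hyperbolic structure on $L$ that does \emph{not} admit any distance-minimizing geodesic---which is the main obstacle---the plan is to construct $L$ explicitly by gluing an infinite system of pairs of pants arranged along a rooted tree pattern, attaching enough handles along the way to force infinite genus and a single end, and tuning all the Fenchel--Nielsen cuff lengths and twist parameters so that any candidate bi-infinite geodesic, whose two ends must both escape into the unique topological end, can be properly homotoped to a strictly shorter path by being re-routed through one of the many branching handles. The heart of the work is verifying this shortcut property simultaneously for every candidate geodesic in the construction, which is what makes the negative direction considerably more delicate than the positive one.
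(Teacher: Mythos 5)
Your first part is correct and follows essentially the same route as the paper: a limit of minimizing segments anchored at a fixed compact separating set. Your positive one-ended construction via a \( \mathbb{Z}^2 \)-cover is also a reasonable variant of the paper's explicit pentagon-tiled cover, but the phrase ``a standard stable-norm argument'' hides a real step: for the lift of \( \gamma \) to be globally minimizing you need \( \ell(\gamma) \) to equal the stable norm \( \|v\| = \inf_k m_k/k \) (where \( m_k \) is the minimal length of a loop in class \( kv \)), not merely the minimal length in class \( v \) itself, and that equality is not automatic. The paper avoids this by building the cover from explicit right-angled pentagons, so that the minimizing property of the vertical geodesic reduces to an elementary width estimate (``the shortest path from one side of a 1-holed square to any other is at least \( 2b \)'').

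The genuine gap is in the negative direction. First, your reduction is incorrect: a non-planar, non-compact, one-ended orientable surface need not be the Loch Ness monster---a once-punctured genus-\( g \) surface with \( g \geq 1 \) satisfies the hypotheses and has a planar end. For those surfaces the paper simply takes a structure with a cusp, where the two rays of any proper geodesic become arbitrarily close inside the cusp. Second, and more seriously, for the Loch Ness monster you offer only a plan and explicitly defer ``the heart of the work,'' so nothing is actually proved; the simultaneous-shortcut verification you flag as delicate is exactly the content that is missing. The paper's choice of structure makes it easy: take disjoint nested separating simple closed curves \( c_n \), with \( c_{n+1} \) separating \( c_n \) from the end, and choose a hyperbolic structure in which every geodesic representative \( \gamma_n \) has length less than a fixed \( L \). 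Any candidate \( \alpha \) may be assumed proper by Lemma~\ref{lem:proper}; since the surface has a unique end, both rays of \( \alpha \) must exit through \( \gamma_N \), so for \( N \) large the portion of \( \alpha \) inside the bounded component of the complement of \( \gamma_N \) has length greater than \( L \), while its two endpoints lie on \( \gamma_N \) and are joined there by an arc of length at most \( L/2 \). This single shortcut defeats every candidate geodesic at once, with no tuning of Fenchel--Nielsen parameters required.
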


We split the proof into three lemmas covering the separate cases.
Let us first consider the multi-ended case.

\begin{lem}
\label{lem:two}
Let \( X \) be a hyperbolic Riemann surface.
If \( X \) has at least two topological ends, then \( X \) contains a distance-minimizing geodesic with distinct ends. 
\end{lem}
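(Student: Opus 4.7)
The plan is to build $\alpha$ as a limit of distance-minimizing arcs whose endpoints escape to two distinct topological ends of $X$, and then verify that the limit inherits the minimizing property while converging to distinct ends. Since $X$ has at least two topological ends, I first fix a compact subsurface $C \subset X$ such that $X \ssm C$ admits two disjoint unbounded components $U_1, U_2$, one a neighborhood of each of two chosen distinct ends. I then pick sequences $p_n \in U_1$ and $q_n \in U_2$ with $\rho(p_n, C), \rho(q_n, C) \to \infty$; by Hopf--Rinow (applicable since $X$ is complete as a Riemannian manifold) there exist unit-speed distance-minimizing arcs $\alpha_n$ from $p_n$ to $q_n$, each of which necessarily meets $C$.

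I reparametrize $\alpha_n \co [-a_n, b_n] \to X$ so that $\alpha_n(0)$ is the first point of $\alpha_n$ lying in $C$ as one travels from $p_n$, forcing $\alpha_n([-a_n, 0)) \subset U_1$ and $a_n, b_n \to \infty$. Using compactness of $C$ and of the unit tangent sphere over $C$, I pass to a subsequence along which $(\alpha_n(0), \alpha_n'(0)) \to (x_0, v_0)$; continuous dependence on initial conditions then yields a limit geodesic $\alpha \co \br \to X$ (defined globally by completeness) with $\alpha_n \to \alpha$ uniformly on every compact subinterval. That $\alpha$ is distance-minimizing then follows from $\rho(\alpha(s), \alpha(t)) = \lim_n \rho(\alpha_n(s), \alpha_n(t)) = |t - s|$ for any $s < t$, since each $\alpha_n$ is minimizing on its domain.

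The delicate step---and the one I expect to be the main obstacle---is verifying that the two ends of $\alpha$ (a proper map by Lemma~\ref{lem:proper}) lie in distinct topological ends of $X$. For $t < 0$ this is immediate: $\alpha_n(t) \in U_1$ for large $n$, so $\alpha(t) \in \overline{U_1}$. For the other direction, I let $s_n$ be the last time $\alpha_n$ meets $C$; the subarc $\alpha_n|_{[0, s_n]}$ is itself minimizing with both endpoints in $C$, so $s_n$ is bounded above by the finite diameter of $C$ in $X$. After passing to a further subsequence $s_n \to s$, and for any fixed $t > s$ the point $\alpha_n(t)$ lies in the component of $X \ssm C$ containing $q_n$, namely $U_2$, so $\alpha(t) \in \overline{U_2}$. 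Since $\alpha$ is proper and $\partial U_i \subset C$, the two tails of $\alpha$ eventually lie in the disjoint open sets $U_1$ and $U_2$ and therefore determine distinct ends of $X$. The critical technical point throughout is this uniform bound on the last-crossing time $s_n$: without it the limit could a priori oscillate near $C$ indefinitely, and it is precisely the distance-minimizing property of each $\alpha_n$, applied to its subarc between two points of $C$, that rules this out.
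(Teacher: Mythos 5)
Your proof is correct and follows the same overall strategy as the paper's: both extract a limit geodesic from a sequence of minimizing arcs joining points that escape to the two chosen ends, anchoring the arcs on a compact separating set and using compactness of the unit tangent bundle over it (the paper uses a single separating simple closed geodesic \( \eta \) where you use a compact subsurface \( C \), which is immaterial). Two details differ, both in your favor. First, where you verify minimality of the limit directly---\( \rho(\alpha(s),\alpha(t)) = \lim_n \rho(\alpha_n(s),\alpha_n(t)) = |t-s| \), valid once \( [s,t] \subset [-a_n,b_n] \)---the paper argues by contradiction, embedding a tubular neighborhood \( Q \) of a segment of \( \alpha \) and comparing a hypothetical shortcut against a nearby \( \gamma_N \); your version is shorter and avoids that construction entirely. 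Second, the paper simply asserts that the endpoints of the limit geodesic are the two distinct ends, whereas you correctly identify this as the delicate point and supply the needed estimate: the last time \( s_n \) at which \( \alpha_n \) meets \( C \) is bounded by the diameter of \( C \), because the subarc of \( \alpha_n \) between its first and last visits to \( C \) is itself minimizing with both endpoints in \( C \). That uniform bound is precisely what prevents the limit from lingering near the compact core and is a genuine gap-filler relative to the paper's one-line claim.
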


\begin{proof}
Let \( e_1 \) and \( e_2 \) be distinct topological ends of \( X \) and let \( \eta \) be a separating, simple, closed geodesic separating \( e_1 \) and \( e_2 \).
Label the  two components of \( X \ssm \eta \) by \( U_1 \) and \( U _2 \) so that \( U_i \) is a neighborhood of \( e_i \).
For \( i\in \{1,2\} \), choose a sequence \( \{x_n^i\}_{n\in\bn} \) in \( U_i \) such that \( \lim x_n^i = e_i \). 
Let \( \gamma_n\co I_n \to X \) be the minimal-length unit-speed geodesic curve between \( x_n^1 \) and \( x_n^2 \). 
Observe that \( \gamma_n \) intersects \( \eta \) (exactly once) for each \( n \in \bn \); let \( t_n \in I_n \) such that \( \gamma_n(t_n) \in \eta \).
By the compactness of the lift of \( \eta \) to the unit tangent bundle of \( X \), the sequence \( \{\gamma_n'(t_n)\}_{n\in\bn} \) accumulates; let \( v \) be an accumulation point of the sequence 
and let \( \al \) be the corresponding unit-speed geodesic in \( X \).
Note that the endpoints of \( \al \) are \( e_1 \) and \( e_2 \) and hence they are distinct.

We claim \( \al \) is a distance-minimizing geodesic: assume not and let \( w \) and \( z \) be points on \( \alpha \) such that there exists a distance-minimizing path \( \delta \) of length strictly less than that of the segment of \( \al \) connecting \( w \) and \(z \).
Let \( \beta \) denote the segment of \( \al \) connecting \( w \) and \( z \) and let \( \Delta = \ell(\beta) - \ell(\delta) \). 
Choose a positive real number \( \ep \) so that \( \ep < \Delta \) and such that the \( \frac \ep2 \)-neighborhood \( Q \) of \( \beta \)  (that is, \( Q = \{ x \in X : \rho(x,\delta) < \frac\ep2\} \)) is isometric to the \( \frac\ep2 \)-neighborhood of a geodesic segment in \( \mathbb{H} \) of length \( \ell(\beta) \).
Note that \( Q \) has two geodesic sides, one containing \( w \) and the other containing \( z \).

Now there exists \( N \in \bn \) such that \( \gamma_N \cap Q \) is connected with endpoints \( w_N \) and \( z_N \) on the same sides of \( Q \) as \( w \) and \( z \), respectively. 
If \( \delta_w \) and \( \delta_z \) are the shortest curves in \( Q \) connecting \( w_N \) to \( w \) and \( z_N \) to \( z \), respectively, then, as \( \delta_w \cup \delta \cup \delta_z \) is a path connecting \( w_N \) and \( z_N \), it follows that
\[
\ell(\delta_w) + \ell(\delta) + \ell(\delta_z) > \ell(\gamma_N \cap Q) > \ell(\beta),
\]
where the first inequality follows from the fact that \( \gamma_N \) is distance minimizing and the second follows from \( \beta \) being the orthogonal connecting the geodesic sides of \( Q \).
But, at the same time, we have 
\[
\ell(\delta_w) + \ell(\delta) + \ell(\delta_z) < \ell(\delta) + \ep < \ell(\delta) + \Delta = \ell(\beta).
\]
However, this is a contradiction as both inequalities cannot hold.
\end{proof}

We now move to the one-ended case.
For Lemma \ref{lem:one-no} and Lemma \ref{lem:one-yes} below, we remind the reader that, up to homeomorphism, there is a unique one-ended, orientable surface whose end is non-planar, namely, the Loch Ness monster surface.

\begin{lem}
\label{lem:one-no}
If \( S \) is a non-planar one-ended, orientable surface, then there exists a hyperbolic Riemann surface \( X \) homeomorphic to \( S \) that does not contain a distance-minimizing geodesic. 
\end{lem}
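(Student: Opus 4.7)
My plan is to build an explicit hyperbolic structure \( X \) on the Loch Ness monster surface by chaining together hyperbolic building blocks of uniformly bounded size, and then to show that one-endedness of \( X \) is geometrically incompatible with the existence of a distance-minimizing geodesic. Concretely, I would fix \( L > 0 \), take \( T_1 \) to be a one-holed torus with totally geodesic boundary \( c_1 \) of length \( L \), and for each \( n \geq 2 \) take \( T_n \) to be a two-holed torus with totally geodesic boundary curves \( c_{n-1} \) and \( c_n \) of length \( L \). Gluing consecutive pieces along the \( c_n \) (with arbitrary twist) produces a complete hyperbolic Riemann surface \( X = \bigcup_n T_n \); since each compact piece \( K_n := T_1 \cup \cdots \cup T_n \) has connected complement and \( X \) has infinite genus, the surface \( X \) is one-ended with a single non-planar end, hence homeomorphic to \( S \) by the classification theorem.

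Now suppose, for contradiction, that \( \alpha \co \mathbb{R} \to X \) is a distance-minimizing geodesic, parameterized by arc length. By Lemma~\ref{lem:proper}, \( \alpha \) is proper, so each preimage \( \alpha^{-1}(K_n) \) is a compact subset of \( \mathbb{R} \), non-empty for all sufficiently large \( n \). Let \( R_n^- = \inf \alpha^{-1}(K_n) \) and \( R_n^+ = \sup \alpha^{-1}(K_n) \). Because \( X \) has only one end, both tails of \( \alpha \) must leave every \( K_n \), and a standard continuity argument forces \( \alpha(R_n^\pm) \in \partial K_n = c_n \).

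The contradiction now follows from two opposing inequalities. On one hand, any two points on the closed geodesic \( c_n \) of length \( L \) are joined by a sub-arc of \( c_n \) of length at most \( L/2 \), so \( d_X(\alpha(R_n^-), \alpha(R_n^+)) \leq L/2 \); since \( \alpha \) is unit-speed and distance-minimizing, this yields \( R_n^+ - R_n^- \leq L/2 \) for every sufficiently large \( n \). On the other hand, every point of the image of \( \alpha \) lies in some \( K_n \), so \( R_n^+ \to +\infty \) and \( R_n^- \to -\infty \); in particular, \( R_n^+ - R_n^- \to \infty \), contradicting the bound above. Hence no such \( \alpha \) exists.

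The step requiring the most care is the verification in the second paragraph: confirming that the infinite gluing produces a complete Riemann surface homeomorphic to \( S \), and that the ``exit points'' \( \alpha(R_n^\pm) \) genuinely lie on \( c_n \) rather than in the interior of \( K_n \). Each of these is routine but non-automatic. The geometric content of the argument is otherwise very light---bounded cuff length together with the triangle inequality on a circle---so I do not anticipate a serious obstacle beyond choosing a sufficiently transparent construction.
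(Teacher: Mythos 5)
Your argument for the infinite-genus case is correct and is essentially the paper's argument in a more explicit costume: the paper likewise produces a nested sequence of disjoint separating curves exhausting the surface whose geodesic representatives have length uniformly bounded by some \( L \), and derives the contradiction from the fact that a proper geodesic must cross the \( n \)-th such curve in both the forward and backward directions, while any two points on that curve are at distance at most \( L/2 \). Your chain of one- and two-holed tori with cuffs of length \( L \) is a perfectly good realization of such a structure, and the verifications you flag (that the exit points \( \alpha(R_n^{\pm}) \) lie on \( c_n \), and that the infinite gluing is complete) are indeed routine --- for completeness, note that the collar lemma keeps the \( c_n \) uniformly separated, so every bounded set lies in some \( K_N \).

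However, there is a genuine gap: the hypothesis is that the \emph{surface} \( S \) is non-planar, not that its unique \emph{end} is non-planar. A one-ended orientable surface of finite positive genus --- a once-punctured torus, say --- is non-planar but has a planar end, and your construction produces an infinite-genus surface, so the \( X \) you build is not homeomorphic to such an \( S \). The paper treats this case separately by choosing a hyperbolic structure in which the unique end is a cusp: a proper geodesic must enter the cusp neighborhood in both directions, and its two unbounded tails then become arbitrarily close to each other while the parameter difference between them tends to infinity, so the geodesic cannot be distance-minimizing. You would need to add an argument of this kind (or some other construction adapted to finite genus) to cover that case; as written, your proof establishes the lemma only when \( S \) is the Loch Ness monster surface.
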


\begin{proof}
There are two cases: either the end of \( S \) is planar or not.
Since \( S \) has positive genus, if the end of \( S \) is planar, we can choose a hyperbolic Riemann surface \( X \) homeomorphic to \( S \) in which the end of \( S \) corresponds to a cusp on \( X \).
Let \( \al \co \mathbb R \to X \) be a continuous function.
If \( \al \) fails to be proper, then it is not distance minimizing by Lemma \ref{lem:proper}, so we may assume that \( \al \) is proper.
In this case, the two unbounded components of the intersection of  \( \alpha \) with a cusp neighborhood become arbitrarily close; hence, \( \alpha \) cannot be a distance-minimizing geodesic.

Now suppose that the end of \( S \) is non-planar.
Let \( c_1 \) be any separating, simple closed curve in \( S \).
We inductively build a sequence of disjoint, separating, simple closed curves \( \{c_n\}_{n\in\bn} \) by requiring that \( c_{n+1} \) separates \( c_n \) from the end of \( S \).
Let \( X \) be a hyperbolic Riemann surface such that there exists \( L > 0 \) so that the length of the geodesic representative \( \gamma_n \) of \( c_n \) has length less than \( L \).  
Now let \( \al\co \mathbb R \to X \) be a geodesic in \( X \); as before, we may assume that \( \al \) is proper.
By the length restriction on the \( \gamma_n \), there exists some \( N \in \mathbb N \) such that the intersection of \( \alpha \) with the bounded component of \( X \ssm \gamma_N \) has length greater than \( L \); hence, \( \alpha \) cannot be a distance-minimizing geodesic. 
\end{proof}

\begin{lem}
\label{lem:one-yes}
If \( S \) is a borderless one-ended, orientable surface, then there exists a hyperbolic Riemann surface \( X \) homeomorphic to \( S \) containing a distance-minimizing geodesic.
\end{lem}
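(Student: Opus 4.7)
The plan is to distinguish the cases where \( S \) is planar and where \( S \) is not.

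\emph{Case 1.} If \( S \) is planar, then \( S \cong \mathbb{R}^2 \); take \( X = \mathbb{D} \) with its Poincar\'e hyperbolic metric. Every diameter of \( \mathbb{D} \) is a distance-minimizing geodesic.

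\emph{Case 2.} If \( S \) is non-planar, I would equip \( S \) with a hyperbolic structure whose unique end is a \emph{funnel end} and exhibit a distance-minimizing geodesic deep inside the funnel. Let \( F = H / \langle g \rangle \) be a standard hyperbolic funnel: here \( H \subset \mathbb{D} \) is an open half-plane (one side of a geodesic), with boundary geodesic \( \partial H \), and \( g \) is a hyperbolic isometry with axis \( \partial H \) and translation length \( L \). For \( S \cong \Sigma_g \setminus \{*\} \) of finite genus \( g \geq 1 \), take \( X = Y \cup_{\partial F} F \) with \( Y \) a compact hyperbolic surface of genus \( g \) having a single totally-geodesic boundary of length \( L \). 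For \( S \) the Loch Ness monster, further attach to \( F \) a countable Schottky system of ``handles'' (new hyperbolic generators pairing small half-disks in \( F \)) whose lifts to \( H \) accumulate to a single ideal point in the ideal arc of \( H \); this yields a hyperbolic surface that is one-ended with infinite genus accumulating only at the funnel end. The candidate distance-minimizing geodesic \( \bar\alpha \subset F \) is the projection of a geodesic \( \tilde\alpha \subset H \) whose two ideal endpoints lie in the interior of the ideal arc of \( H \), bounded away from the accumulation point, and at minimum distance \( D_0 > 0 \) from \( \partial H \).

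\emph{Distance-minimality.} To verify \( \bar\alpha \) is distance-minimizing in \( X \), working in the universal cover one must show \( d_{\mathbb{D}}(\tilde p, \gamma \tilde q) \geq d_{\mathbb{D}}(\tilde p, \tilde q) \) for all \( \tilde p, \tilde q \in \tilde\alpha \) and every non-identity \( \gamma \) in the uniformizing Fuchsian group \( \Gamma \) of \( X \). The hyperbolic right-triangle identity \( \cosh d(p, p'') = \cosh D \cosh s \)---for \( p'' \) on a geodesic \( \ell \) at arc length \( s \) from the foot of a perpendicular of length \( D \) dropped from \( p \)---implies any path in \( X \) crossing \( \partial F \) between two points of \( \bar\alpha \) gains at least \( 2D_0 - O(1) \) of extra length over the direct arc. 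Applied to \( \tilde\alpha \) versus each translate \( g^n \tilde\alpha \) and versus the axes of the Schottky handles (together with standard Schottky ping-pong estimates on \( \gamma \tilde\alpha \) for long words \( \gamma \)), the same identity rules out all other shortcuts provided \( L \) is large and each handle's half-disks sit at sufficient hyperbolic depth from \( \tilde\alpha \).

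\emph{Main obstacle.} The principal difficulty is the Loch Ness case: the countably many Schottky handles must be chosen so that (i) the group \( \Gamma \) is discrete, (ii) the quotient is homeomorphic to the Loch Ness monster, and (iii) the distance-minimality estimate survives for every element of the infinite-rank \( \Gamma \), including arbitrarily long words mixing \( g \) with handle generators. I would handle this by an inductive construction in which the \( n \)-th handle's half-disks are placed at hyperbolic depth growing fast enough relative to the maximum shortcut length achievable using only the first \( n-1 \) handles.
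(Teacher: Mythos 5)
Your planar and finite-genus cases are fine and essentially match the paper's first case: put a funnel at the end and take a geodesic with both ideal endpoints on the boundary circle at infinity of the funnel (one does need those endpoints close enough together --- equivalently \( D_0 \) large relative to the core length --- so that no power of the core isometry \( g \) yields a shorter representative, but your ``\( L \) large'' caveat covers this). The gap is in the Loch Ness monster case, which is the only genuinely hard case of the lemma. There the entire content of the statement is the verification that \( d_{\mathbb D}(\tilde p,\gamma\tilde q)\geq d_{\mathbb D}(\tilde p,\tilde q) \) for \emph{every} nontrivial \( \gamma \) in an infinitely generated Fuchsian group, and your proposal does not carry this out: it is deferred to ``standard Schottky ping-pong estimates'' plus an inductive choice of depths whose feasibility is asserted rather than demonstrated. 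Concretely, for a word \( \gamma \) mixing \( g \) with several handle generators you would need a uniform estimate of the form: any loop in the class of \( \gamma \) based at a point of \( \bar\alpha \) must cross one of the gluing geodesics \( c_n \), and doing so costs at least \( 2\,d(c_n,\bar\alpha)-C \) over the direct arc, uniformly over all \( \tilde p,\tilde q \) on the infinite geodesic \( \tilde\alpha \) and all \( n \). Nothing in the proposal establishes this, and it is exactly where the proof lives. (Discreteness of the infinitely generated group and the check that the quotient is one-ended of infinite genus are also left unverified, though those are more routine.)

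For comparison, the paper sidesteps all control over group elements with an explicit construction: it tiles the plane by one-holed squares, each assembled from four right-angled pentagons with two adjacent sides of length \( b \), glues the holes in horizontal pairs to obtain a one-ended infinite-genus surface \( R_b \), and takes \( \alpha \) to be a vertical geodesic of the tiling. Distance-minimality then reduces to a counting argument: consecutive horizontal geodesics of the tiling are at distance exactly \( 2b \), any competing path between the same endpoints must cross the same horizontal geodesics, hence has length at least \( 2nb \), which is precisely the length of the corresponding segment of \( \alpha \). If you want to keep your funnel-with-handles approach, you must actually prove the uniform shortcut estimate above; as written, the main step is a plan rather than a proof.
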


\begin{proof}
Again we split into two cases: first, suppose that the end of \( S \) is planar. 
In this case, let \( X \) be a hyperbolic Riemann surface homeomorphic to \( S \) such that the end of \( S \) corresponds to a funnel on \( X \).
All funnels have distance-minimizing geodesics.

\begin{figure}[t]
\centering
\includegraphics[scale=0.9]{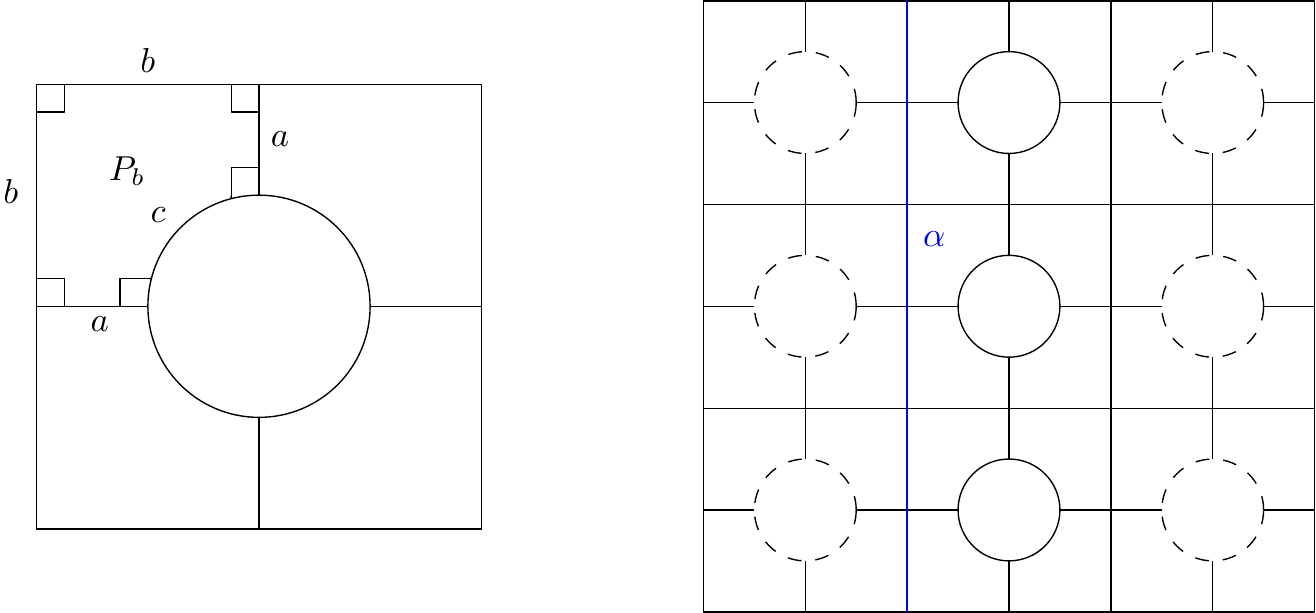}
\caption{On the left, \( P \) is a hyperbolic right-angled pentagon and four copies of \( P \) are glued to form a square with a disk removed.
On the right, these 1-holed squares are glued in a tiling extending in all directions; the extension of the vertical geodesic \( \al \) forms a proper geodesic arc.}
\label{fig:hairytorus}
\end{figure}

Let us continue to the case where \( S \) has a non-planar end.  
Given a positive number \( b \), there exists a unique right-angled hyperbolic pentagon \( P_b \) having two consecutive sides of length \( b \); let the consecutive sides of \( P_b \) have lengths \( b,b,a,c,a \) as in Figure~\ref{fig:hairytorus}.
We can glue four copies of \( P_b \) to form a 1-holed squared with outer boundary having length \( 8b \) and inner boundary \( 4c \). 

We now build a bordered hyperbolic surface inductively: let \( T^1_b \) be a copy of the 1-holed square above.
For \( n \in \bn \), we construct \( T^{n+1}_b \) by pasting eight copies of \( T^n_b \) to form a rectangle with \( 9^{n-1} \) holes.
We identify \( T^n_b \) with the middle copy of \( T^n_b \) in \( T^{n+1}_b \).
We then let \( T_b \) be the direct limit of the \( T^n_b \).
(Less formally, we obtain \( T_b \) by tiling the plane with copies of the 1-holed square, see Figure \ref{fig:hairytorus}.)

Let \( R_b \) be the hyperbolic Riemann surface obtained by identifying the boundary components of \( T_b \) horizontally via an orientation-reversing isometry, so that a (dashed) circle is identified with the (solid) circle to its right in Figure \ref{fig:hairytorus}.
Note that \( R_b \) is infinite genus and one-ended and hence homeomorphic to \( S \).

Now let \( \al \) be a vertical geodesic as in Figure \ref{fig:hairytorus}. 
We claim that \( \al \) is a distance-minimizing geodesic in \( R_b \).
It is enough to prove that \( \al \) minimizes distances between the corners of 1-holed squares for which it passes.
Let \( x \) and \( y \) be two such corners and let \( \gamma \) a distance-minimizing path between them.
By construction, the shortest path from one side of a 1-holed square to any other is at least \( 2b \). 
It follows that distance between two infinite horizontal geodesics in \( T_b \) is exactly \( 2b \). 
The same is true in \( R_b \) as the gluing of boundary components does not change height.
Now if \( \al \) crosses through \( n-1 \) horizontal geodesics from \( x \) to \( y \), then \( \gamma \) must do the same and in particular the length of \( \gamma \) is at least \( 2nb \), which of course is the length of the segment of \( \al \) connecting \( x \) and \( y \).
\end{proof}

Of course the difficulty in the one-ended case is that a proper arc needs to approach the unique end of the surface in both the forwards and backwards directions. 
To capture this, we prove the existence of a distance-minimizing ray.
Here, a \emph{ray} is the image of a continuous injective map of the half line \( [0, \infty) \subset \br \). 

\begin{prop}
Every point on a non-compact hyperbolic Riemann surface is the base point of some distance-minimizing ray.
\end{prop}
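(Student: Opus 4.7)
The plan is to construct the desired ray as a limit of distance-minimizing geodesic segments emanating from \( p \), with endpoints escaping every compact set. Since \( X \) is realized as a quotient \( \mathbb D / \Gamma \) for a Fuchsian group \( \Gamma \), the metric \( \rho \) is the pushforward of the complete hyperbolic metric on \( \mathbb D \) by a local isometry and is therefore itself complete. As \( X \) is non-compact, I may choose a sequence \( \{x_n\}_{n\in\bn} \subset X \) with \( L_n := \rho(p, x_n) \to \infty \); Hopf--Rinow then provides, for each \( n \), a unit-speed distance-minimizing geodesic segment \( \gamma_n \co [0, L_n] \to X \) with \( \gamma_n(0) = p \) and \( \gamma_n(L_n) = x_n \).

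Next I extract a limiting direction. The initial velocities \( v_n := \gamma_n'(0) \) all lie in the unit tangent circle \( T_p^1 X \), which is compact, so after passing to a subsequence I may assume \( v_n \to v \) for some \( v \in T_p^1 X \). Let \( \al \co [0, \infty) \to X \) be the unit-speed geodesic with \( \al(0) = p \) and \( \al'(0) = v \); by completeness, \( \al \) is defined for all \( t \ge 0 \).

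To verify that \( \al \) is distance minimizing, fix \( 0 \le s < t \) and choose \( N \) so that \( L_n > t \) for all \( n \ge N \). By continuity of the exponential map at \( p \), \( \gamma_n(s) = \exp_p(sv_n) \to \exp_p(sv) = \al(s) \) and similarly \( \gamma_n(t) \to \al(t) \). Since each \( \gamma_n \) is distance minimizing, \( \rho(\gamma_n(s), \gamma_n(t)) = t - s \); passing to the limit and using continuity of \( \rho \) yields \( \rho(\al(s), \al(t)) = t - s \). Hence \( \al \) is a distance-minimizing geodesic on \( [0, \infty) \); in particular it is injective, so its image is a ray based at \( p \).

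I do not anticipate a serious obstacle in executing this plan. The only steps that require attention are the completeness of \( (X, \rho) \) (needed to invoke Hopf--Rinow and to extend \( \al \) for all positive time) and the compactness of \( T_p^1 X \) (needed to extract a limiting direction), both of which are standard. The construction is a direct analogue of the classical argument producing geodesic rays in a complete non-compact Riemannian manifold, as used for instance in the definition of Busemann functions.
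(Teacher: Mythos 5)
Your proof is correct and follows essentially the same strategy as the paper's: both extract a limiting initial direction from minimal-length geodesic segments joining the base point to a sequence leaving every compact set, and take the geodesic ray determined by that direction. The only difference is in verifying minimality, where the paper defers to the comparison argument of Lemma~\ref{lem:two} (which is designed for segments that do not share a base point), while your appeal to continuity of \( \exp_p \) and of \( \rho \) is a clean shortcut available precisely because all of your segments emanate from the common point \( p \).
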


\begin{proof}
We provide the sketch of the proof as the details are nearly identical to those in the proof of Lemma \ref{lem:two}.
Let \( X \) be a non-compact hyperbolic Riemann surface and let \( e \) be a topological end of \( X \).
Fix a sequence \( \{x_n\}_{n\in\bn} \) that limits to \( e \).
Now let \( x \) be a point in \( X \) and, for \( n \in \bn \), let \( \gamma_n \) denote a unit-speed, minimal-length, geodesic curve starting at \( x \) and ending at \( x_n \).
Let \( v_n = \gamma_n'(0) \), then we may choose a unit vector \( v \) in the accumulation set of the sequence \( \{v_n\}_{n\in\bn} \).
Arguing as in Lemma \ref{lem:two}, the geodesic ray based at \( x \) determined by \( v \) is distance minimizing. 
\end{proof}


\section{QCH ladder surfaces are regular covers}

In this section, we prove our main theorem:

\begin{thm}
\label{thm:qch}
A hyperbolic ladder surface is quasiconformally homogeneous if and only if it is quasiconformally equivalent to a regular cover of a closed hyperbolic surface.
\end{thm}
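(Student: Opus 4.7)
The forward direction is immediate: any regular cover $Y$ of a closed hyperbolic surface is QCH, because the cocompact isometric action of the deck group combines with finitely many local QC models on the base to give uniform $K$-QCH (as in the proof of \cite[Proposition 2.4]{BonfertQuasiconformal}); since QCH is preserved under quasiconformal equivalence with a controlled change of constant, so is any surface $X$ quasiconformally equivalent to such a $Y$. I focus on the substantive converse.

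Let $X$ be a $K$-QCH hyperbolic ladder surface. First I would apply Theorem~\ref{thm:arcs} to extract a distance-minimizing geodesic $\alpha \co \mathbb R \to X$ whose two ends lie in the two distinct topological ends of $X$. Next, using the $K$-QCH hypothesis together with Lemma~\ref{lem:basics}(i), I would establish uniform two-sided bounds on the injectivity radius of $X$: if $X$ had arbitrarily thin parts, a $K$-QC self-map taking a fixed basepoint into such a thin part would send a short closed geodesic to one of controlled length, eventually contradicting the $K$-bound. By the collar lemma and the Bers pants decomposition theorem (applied to suitable compact subsurfaces with totally geodesic boundary), this bounded geometry yields a pants decomposition $\mathcal P = \{P_n\}_{n \in \mathbb Z}$ of $X$ with cuffs $\{\gamma_n\}$ of lengths in a compact interval $[\epsilon, B]$ depending only on $K$; the linear indexing by $\mathbb Z$ is forced by the two-ended, purely non-planar topology of a ladder surface.

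With this decomposition in hand, I would pick $x_n \in \alpha \cap P_n$ for each $n$ and invoke $K$-QCH to produce $K$-QC self-maps $f_n \co X \to X$ with $f_n(x_0) = x_n$. By Lemma~\ref{lem:basics}(ii)--(iii) these are $(K, K \log 4)$-quasi-isometries sending each cuff of $\mathcal P$ to a simple closed curve within bounded Hausdorff distance of a geodesic of length in $[\epsilon/K, KB]$. Recording, for each finite block $P_n \cup \cdots \cup P_{n+L-1}$, its Fenchel--Nielsen data (cuff lengths in $[\epsilon, B]$ and twist parameters reduced modulo the uniformly controlled Dehn twists coming from the $f_n$) produces a sequence of points in a compact parameter space; a pigeonhole/compactness argument then yields, for some $L$, a pair $n < m$ for which two such blocks agree up to arbitrarily small error.

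The main step, and the anticipated principal obstacle, is to bootstrap this approximate periodicity into genuine $\mathbb Z$-periodicity on a surface $X'$ quasiconformally equivalent to $X$. I would assemble $X'$ by taking one such block and gluing isometric copies along a $\mathbb Z$-indexed chain with the matched Fenchel--Nielsen data, then produce a $K'$-quasiconformal map $X \to X'$ by a Beltrami-coefficient interpolation argument, with $K'$ depending only on $K$. By construction, $X'$ carries an isometric $\mathbb Z$-action whose quotient is a closed hyperbolic surface, exhibiting $X$ as quasiconformally equivalent to a regular cover. The hard part is controlling the dilatation through this straightening, and in particular bounding the topological complexity (equivalently, the area) of a fundamental period — plausibly the source of the non-explicit constant $A$ the authors mention arising in Lemma~\ref{lem:area}.
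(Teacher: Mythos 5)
Your overall strategy matches the paper's: extract a distance-minimizing geodesic via Theorem~\ref{thm:arcs}, produce a uniformly bounded pants decomposition of the standard ladder form, observe that the Fenchel--Nielsen data then lives in a compact set, and quasiconformally deform to a $\mathbb Z$-periodic model whose quotient is closed. But there is a genuine gap at the pivotal middle step. You assert that two-sided injectivity radius bounds (``bounded geometry''), via the collar lemma and Bers's theorem, yield a pants decomposition with cuffs of length in $[\epsilon,B]$ and with the linear $\mathbb Z$-indexing ``forced by the topology.'' This does not follow: Bers's theorem requires an a priori bound on the \emph{area} of the compact pieces being decomposed, and bounded geometry alone does not produce uniformly short ends-separating curves that are evenly spaced with uniformly bounded-area complementary blocks. (The paper's surface $R_b$ from Lemma~\ref{lem:one-yes} is QCH---hence has bounded geometry---yet admits no Shiga pants decomposition at all; while that example is one-ended, it shows the implication you are leaning on is not a consequence of bounded geometry.) The paper instead uses the $K$-QCH maps $f_n$ applied to a \emph{fixed} ends-separating geodesic $\gamma_0$, spaced along the distance-minimizing geodesic, to manufacture the curves $\gamma_n$ (Lemma~\ref{lem:special curves}), then a separate argument trapping each block $Y_n$ inside $f_n(Z)$ to bound its area (Lemma~\ref{lem:area}---the source of the non-explicit constant $A$), and finally a modular-pants-graph argument (Lemmas~\ref{lem:short-pants} and~\ref{lem:special-shiga}) to convert an arbitrary Shiga decomposition into one of the standard combinatorial type of Figure~\ref{fig:pants-decomp2}. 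You correctly flag the area bound as the crux at the very end, but your argument as structured places the pants decomposition \emph{before} that bound, where it cannot yet be justified.

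A secondary point: your pigeonhole step is an unnecessary detour. Once all Fenchel--Nielsen sextuplets lie in a fixed compact set (lengths in $[\epsilon,B]$, twists normalized to $[0,2\pi)$ by precomposing with Dehn twists), one can deform directly to \emph{any} fixed periodic model---the paper uses the surface with all coordinates $[1,0,1,0,1,0]$---and the marking map is quasiconformal by the Fenchel--Nielsen/quasiconformal comparison theorem of \cite[Theorem~8.10]{AlessandriniFenchel}. Your ``Beltrami-coefficient interpolation'' is exactly that theorem in disguise; finding an approximately repeating block adds nothing, since the blocks of $X$ far from the matched pair still have to be deformed by the same bounded amount. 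With the pants-decomposition step repaired along the lines of Lemmas~\ref{lem:special curves} and~\ref{lem:area}, and the deformation step replaced by the cited theorem, your argument becomes the paper's proof.
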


It is not difficult to see that being QCH is a quasi-conformal invariant and that every regular cover of a closed hyperbolic surface is QCH (see \cite[Proposition 2.7]{BonfertQuasiconformal}); hence, to prove Theorem~\ref{thm:qch}, we only need to focus on the forwards direction.

The proof will be split into the lemmas in the subsections below.
Throughout the subsections below $X$ denotes a \( K \)-QCH ladder surface. 
Let  $\mathcal{F}_K$  be the set of $K$-quasiconformal homeomorphisms \( X \to X \).
We say that a simple closed curve in \( X \) \emph{separates the ends} of \( X \) if its complement consists of two unbounded components. 

\subsection{Shiga pants decomposition}

A \emph{Shiga pants decomposition} of a hyperbolic Riemann surface is a pants decompositions whose cuff lengths are uniformly bounded from above. 
The goal of this subsection is to show that every QCH ladder surface has a Shiga pants decomposition.
It seems natural to expect a QCH surface to have such a pants decomposition, but, in fact, this is not always the case.
For example, the surface \( R_b \) constructed in the proof of Lemma \ref{lem:one-yes} is QCH (it is a regular cover of a closed hyperbolic surface), but does not have a Shiga pants decomposition \cite{ParlierPrivate}.

The first step in the proof is to find a sequence of pairwise-disjoint simple closed geodesics that separate the ends of \( X \), that are of uniformly bounded length, and that are ``evenly" spaced throughout the surface (Lemma \ref{lem:special curves}).
We then show that the subsurfaces in the complement of these curves have bounded topology (this will follow from Lemma~\ref{lem:area}); the existence of a Shiga pants decomposition for \( X \) will follow by taking a Bers pants decomposition for each of these complementary subsurfaces.

Two real-valued functions $f(x)$ and $g(x)$ are said to be {\it comparable}, denoted $f \asymp g$, if there exists positive constants $A$ and $B$ so that
$A \leq \frac{f(x)}{g(x)} \leq B$,  for all $x$.

\begin{lem} 
\label{lem:special curves}
There exists a sequence of pairwise-disjoint simple separating geodesics \( \{\gamma_n\}_{n\in\mathbb Z} \) such that each \( \gamma_n \) separates the ends of \( X \) and so that
\begin{equation} \label{ }
\rho (\gamma_n, \gamma_{m}) \asymp H(\gamma_n, \gamma_{m})
\asymp |m-n|
\end{equation}
for all $n,m  \in \mathbb{Z}$. 
Moreover, the constants in the comparisons depend only on  $K$  and $L = \ell_X (\gamma_0)$.
\end{lem}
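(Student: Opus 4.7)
The plan is to use the distance-minimizing geodesic produced by Theorem~\ref{thm:arcs} as a combinatorial backbone, then distribute $K$-quasiconformal translates of a single separating simple closed geodesic along it. Choose any simple closed curve in $X$ separating the two ends (such curves exist since $X$ is two-ended), let $\gamma_0$ be its simple closed geodesic representative, and set $L := \ell_X(\gamma_0)$. Since $X$ has two ends, Theorem~\ref{thm:arcs} supplies a distance-minimizing geodesic $\alpha\co \mathbb R \to X$; it is a proper embedding with endpoints at the two distinct ends, so it must cross $\gamma_0$. After reparameterization we may therefore assume $p_0 := \alpha(0) \in \gamma_0$.

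Next, fix a spacing $T > 0$ to be determined, set $x_n := \alpha(nT)$ for each $n \in \mathbb Z$, and use $K$-QCH to select $f_n \in \mathcal F_K$ with $f_n(p_0) = x_n$, taking $f_0 = \mathrm{id}$. Set $\beta_n := f_n(\gamma_0)$ and let $\gamma_n$ be the simple closed geodesic freely homotopic to $\beta_n$. Since $X$ has exactly two topological ends, every self-homeomorphism of $X$ permutes them, so $\beta_n$ still separates the two ends, and hence so does $\gamma_n$. By Lemma~\ref{lem:basics}(i), $\ell_X(\beta_n) \leq KL$, whence $\ell_X(\gamma_n) \leq KL$ as well (the geodesic representative minimizes length in its class). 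Lemma~\ref{lem:basics}(iii) furnishes a constant $R = R(K, L)$ with $H(\beta_n, \gamma_n) \leq R$, and combining this with the fact that $\beta_n$ is a simple closed curve of length at most $KL$ passing through $x_n$ shows that every point of $\gamma_n$ lies within distance $R + KL/2$ of $x_n$.

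The comparability estimates then fall out of the triangle inequality applied to the distance-minimizing identity $\rho(x_n, x_m) = T|n-m|$:
\[
T|n-m| - (2R + KL) \;\leq\; \rho(\gamma_n, \gamma_m) \;\leq\; H(\gamma_n, \gamma_m) \;\leq\; T|n-m| + (2R + KL).
\]
Choosing $T := 2(2R + KL) + 2$ forces both bounds to be comparable to $|n-m|$ with constants depending only on $K$ and $L$, and simultaneously ensures $\rho(\gamma_n, \gamma_m) \geq 1$ whenever $n \neq m$, so that the $\gamma_n$ are pairwise disjoint. The main obstacle is not producing candidate curves---$K$-QCH yields these in abundance---but rather simultaneously controlling $\rho(\gamma_n, \gamma_m)$ from \emph{below}; without the sharp equality $\rho(x_n, x_m) = T|n-m|$ given by Theorem~\ref{thm:arcs}, the quasi-isometric bound of Lemma~\ref{lem:basics}(ii) alone would only yield a one-sided inequality. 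A minor secondary verification is that the integer indexing respects the intrinsic linear order on disjoint separating curves in a two-ended surface, but this is automatic once one observes that $\gamma_n$ lies near $\alpha(nT)$ and $\alpha$ threads monotonically between the two ends.
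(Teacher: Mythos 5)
Your construction is the same as the paper's: translate a fixed separating geodesic $\gamma_0$ along a distance-minimizing geodesic via $K$-quasiconformal maps spaced sufficiently far apart, and pass to geodesic representatives. The argument is correct, and your bookkeeping---trapping each $\gamma_n$ in the ball of radius $R + KL/2$ about $x_n$ and applying the triangle inequality to the exact identity $\rho(x_n,x_m) = T|n-m|$---reaches the comparisons slightly more directly than the paper, which instead chains consecutive gaps through the intermediate separating curves and invokes the collar lemma to lower-bound $\rho(\gamma_n,\gamma_m)$ for the Hausdorff comparison; the approach is essentially identical.
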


\begin{proof}
Choose any simple closed geodesic separating the ends of \( X \) and label it \( \gamma_0 \).
Set $L = \ell_X(\gamma_0)$. 
As \( X \) is two ended, by Lemma \ref{lem:two}, we may choose a distance-minimizing geodesic \( \beta \) on \( X \) with distinct ends. 
Identify \( \beta \) with a unit-speed parameterization \( \beta \co \mathbb R \to X \) such that \( \beta(0) \in \gamma_0 \); set \( x_0 = \beta(0) \).

Let \( R = R(K) \) be as in Lemma \ref{lem:basics} and, for \( n \in \mathbb Z \), let \( x_n = \beta( 3n(R+KL) ) \). 
As \( X \) is \( K \)-QCH, we may choose $f_n \in \mathcal{F}_K$ such that $f_n(x_0)=x_n$.
Finally, let   $\gamma_n$ be the geodesic  in the homotopy class of $f_n(\gamma_0)$.
(Recall \( H(\gamma_n, f_n(\gamma_0)) \leq R \).)

We claim that the sequence $\{\gamma_n\}_{n\in\mathbb Z}$  has the desired properties. 
To see this, first observe, for every \( n \in \mathbb Z \), that \( \ell_X(\gamma_n) \leq KL \) and \( \gamma_n \) separates the ends of \( X \).
Next, we compute the distance between $\gamma_n$ and $\gamma_{n+1}$.  
Observe that we can construct a path from \( x_n \) to \( x_{n+1} \) of length less than \( \frac{KL}2 + R + \rho(\gamma_n, \gamma_{n+1}) + R + \frac{KL}2 \), which must have length at least \( 3(R+KL) \) as \( \beta \) is distance minimizing; hence,
\[
\rho(\gamma_n, \gamma_{n+1}) \geq 3(R+KL)-2R-KL=R+2KL.
\]
Regarding an upper bound, we have
\[
\rho(\gamma_n, \gamma_{n+1}) \leq \rho(x_n,x_{n+1}) + 2 R = 3(R+KL) + 2R = 5R + 3KL,
\]
where the first inequality uses \( H( f(\gamma_n), \gamma_n ) < R\).

Assume $m>n$ and recall that any path from \( \gamma_n \) to \( \gamma_m \) must pass through \( \gamma_k \) for all \( n < k < m \) and hence

\begin{equation}
\label{eq:rho_lower}
\rho(\gamma_n, \gamma_{m}) 
\geq  \sum_{k=n}^{m-1} \rho(\gamma_k, \gamma_{k+1})
\geq  (m-n)(R+2KL).
\end{equation}

It follows that \( \rho(\gamma_n,\gamma_m) > 0 \); in particular, \( \gamma_n \cap \gamma_m = \emptyset \) for all distinct \( n,m \in \mathbb Z \). 
Now, as \( \gamma_n \) and \( \gamma_m \) are disjoint,  \( \rho(\gamma_n, \gamma_m) \) is realized by an orthogeodesic between them.
For the upper bound, using the fact that the orthogeodesic from 
$\gamma_n$ to $\gamma_m$ is shorter than the piecewise-continuous curve made up of orthogeodesics between successive $\gamma_k$ and arcs  along the $\gamma_k$ we have

\begin{equation}
\label{eq:rho_upper}
\rho(\gamma_n, \gamma_{m}) 
\leq  \sum_{k=n}^{m-1} \left[ \rho(\gamma_k, \gamma_{k+1})
+\frac{\ell_X (\gamma_k)}{2}  \right]
\leq  (m-n)\left[ (5R+3KL) +  \frac{KL}{2}  \right]
\end{equation}
where the last inequality uses the fact that  
$\ell_X (\gamma_k) \leq KL$.
Combining \eqref{eq:rho_lower} and \eqref{eq:rho_upper}, we have shown

\begin{equation} 
\label{eq: hyp-combinatorial comparison}
(R+2KL)\leq \frac{\rho(\gamma_n, \gamma_{m})}{|m-n|}
\leq (5R+\frac{7KL}{2})
\end{equation}
implying \( \rho(\gamma_n, \gamma_m) \asymp |n-m| \).

 To show that  $\rho(\gamma_n, \gamma_{m})$ is comparable to 
$H(\gamma_n, \gamma_{m})$, we first consider the following inequality:

\begin{equation}
\rho(\gamma_n, \gamma_{m}) \leq H(\gamma_n, \gamma_{m})
\leq   \ell_X (\gamma_n)/2 +  \rho(\gamma_n, \gamma_{m}) + \ell_X (\gamma_m)/2 \leq KL+  \rho(\gamma_n, \gamma_{m})
\end{equation}

Dividing the above  inequality  by  
$\rho(\gamma_n, \gamma_{m})$ we obtain

\begin{equation}
\label{eq: inequality}
1 \leq \frac{H(\gamma_n, \gamma_{m})}{\rho(\gamma_n, \gamma_{m})} \leq \frac{KL}{\rho(\gamma_n, \gamma_{m})}+1
\end{equation}

For every integer \( n \), using that \( \gamma_n \) is the geodesic homotopic to \( f_n(\gamma_0) \), we have that \( \ell_X(\gamma_n) \geq \frac LK \).
Therefore, since \( \gamma_n \) and \( \gamma_m \) are disjoint for distinct integers \( n \) and \( m \),  the collar lemma implies that the \( \eta\left(\frac LK \right) \)-neighborhoods of \( \gamma_n \) and \( \gamma_m \) are embedded and disjoint.
In particular, we have \( \rho(\gamma_n, \gamma_m) \geq 2 \eta\left(\frac LK\right) \).
Therefore,  \eqref{eq: inequality} becomes

\begin{equation} \label{eq: haus-hyp comparison}
1 \leq \frac{H(\gamma_n, \gamma_{m})}{\rho(\gamma_n, \gamma_{m})} \leq \frac{KL}{2 \eta(\frac{L}{K})}+1.
\end{equation}

This finishes the proof of the lemma. 
\end{proof}

We remark that putting together lines \eqref{eq: hyp-combinatorial comparison}
and \eqref{eq: haus-hyp comparison} yields the concrete comparison:

\begin{equation} \label{eq: haus-combinatorial comparison}
(R+2KL) \leq \frac{H(\gamma_n, \gamma_{m})}{|m-n|} \leq 
\left(\frac{KL}{2 \eta(\frac{L}{K})}+1\right)\left(5R+\frac{7KL}{2}\right)
\end{equation}

\begin{lem}
\label{lem:area}
Let \( \{\gamma_n\}_{n\in\mathbb N} \) be the sequence of geodesics constructed in Lemma \ref{lem:special curves} and let \( Y_n \) be the compact subsurface co-bounded by \( \gamma_n \) and \( \gamma_{n+1} \).
There exists a positive real number \( A \)---depending on \( X \)---such that the area of \( Y_n \) is at most \( A \). 
\end{lem}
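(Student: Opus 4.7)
By Gauss--Bonnet applied to $Y_n$ (a compact hyperbolic surface with totally geodesic boundary and two boundary components), $\mathrm{area}(Y_n) = -2\pi\chi(Y_n) = 4\pi g_n$ where $g_n := \mathrm{genus}(Y_n)$. So I only need to bound $g_n$ uniformly in $n$. The plan is to pull back each $Y_n$ via $f_n^{-1}$ to a subsurface near the basepoint $x_0 \in \gamma_0$ and show only finitely many topological types of such pullbacks can arise.

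Set $Z_n := f_n^{-1}(Y_n)$, a compact subsurface of $X$ homeomorphic to $Y_n$. Its boundary consists of $f_n^{-1}(\gamma_n)$, which is isotopic to $\gamma_0$ by construction of $\gamma_n$, and $f_n^{-1}(\gamma_{n+1})$, which is isotopic to $h_n(\gamma_0)$, where $h_n := f_n^{-1}\circ f_{n+1}$ is $K^2$-quasiconformal. Let $\delta_n$ denote the (simple closed, separating) geodesic representative of $h_n(\gamma_0)$; by Lemma~\ref{lem:basics}(i) it has length at most $K^2 L$.

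The next step is to show every $\delta_n$ meets a fixed compact subset of $X$. Applying Lemma~\ref{lem:basics}(ii) to $f_n^{-1}$ together with $\rho(x_n,x_{n+1}) = 3(R+KL)$,
\[
\rho(x_0, h_n(x_0)) = \rho(x_0, f_n^{-1}(x_{n+1})) \leq 3K(R+KL) + K\log 4.
\]
Since $h_n(x_0) \in h_n(\gamma_0)$ and $H(h_n(\gamma_0), \delta_n)$ is bounded in terms of $K$ and $L$ by Lemma~\ref{lem:basics}(iii), the geodesic $\delta_n$ meets a ball around $x_0$ of fixed radius. Because only finitely many simple closed geodesics of bounded length meet any fixed compact subset of a hyperbolic surface (closed geodesics are locally finite), the sequence $(\delta_n)$ takes only finitely many values, a number depending on the hyperbolic structure of $X$.

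To conclude, for each value $\delta$ of $\delta_n$, the boundary of $Z_n$ consists of two disjoint simple closed curves in the isotopy classes $[\gamma_0]$ and $[\delta]$, both separating the two ends of the ladder surface. Since these classes admit disjoint representatives, the geodesics $\gamma_0$ and $\delta$ are themselves disjoint, and a simultaneous ambient isotopy of $X$ carries the boundary of $Z_n$ to $\gamma_0 \cup \delta$; under this isotopy, $Z_n$ is carried to the unique compact component of $X \setminus (\gamma_0 \cup \delta)$, whose genus is a fixed integer depending only on $(\gamma_0, \delta)$. Thus $g_n$ takes one of finitely many values, yielding the uniform bound. \emph{The main obstacle} is that this bound---and hence $A$---depends on the hyperbolic structure of $X$ near $x_0$ through the count of the possible $\delta_n$ rather than only on $K$, which explains why $A$ cannot be made explicit in $K$ alone.
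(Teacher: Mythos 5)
Your proof is correct, but it takes a genuinely different route from the paper's. The paper argues by pushing forward: it chooses \( m \) explicitly in terms of \( K \) and \( L \) so large that, by the quasi-isometry property of \( f_n \), both \( f_n(\gamma_{m}) \) and \( f_n(\gamma_{-m}) \) stay farther from \( \gamma_n \) than the Hausdorff diameter of \( Y_n \); it concludes \( Y_n \subset f_n(Z) \), where \( Z \) is the fixed compact piece between \( \gamma_{-m} \) and \( \gamma_m \), and then bounds the area of \( f_n(Z) \) by the area of the \( R \)-neighborhood of its geodesic straightening. You instead pull back by \( f_n^{-1} \), identify the isotopy classes of \( \partial f_n^{-1}(Y_n) \) as \( [\gamma_0] \) and \( [\delta_n] \), and use discreteness of the Fuchsian group (only finitely many closed geodesics of length at most \( K^2L \) meet a fixed compact set) together with Gauss--Bonnet to bound the genus. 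Both arguments are sound and both are non-effective in \( K \) alone---the paper through the area of \( Z \), yours through the count of admissible \( \delta_n \)---so your closing remark correctly mirrors the caveat in the introduction. Your route has the small advantage of showing the \( Y_n \) realize only finitely many topological types outright, but it leans on one fact you should cite or justify explicitly: that a pair of disjoint essential simple closed curves is determined up to ambient isotopy by the isotopy classes of its components (the change-of-coordinates principle; when the two classes coincide the cobounded region is an annulus, which cannot occur here since distinct disjoint closed geodesics are never freely homotopic). That is the one step where the argument is topological rather than formal, and it deserves a sentence.
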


\begin{proof}
Denote  the lower bound of \eqref{eq: hyp-combinatorial comparison}  by $a$ and the upper bound of (\ref{eq: haus-combinatorial comparison})  by \( b \).  
Let \( C = K\log4 \), then, as $f \in \mathcal{F}_K$ is a \( (K,C) \)-quasi-isometry,
\begin{equation}\label{eq: dist between geodesics}
\rho(f (\gamma_n), f (\gamma_m)) \geq \frac{1}{K} \rho( \gamma_n, \gamma_m)-C
\end{equation}
for all \( n, m \in \mathbb Z \).

Choose \( m \in \mathbb{N} \) satisfying \( m > \frac Ka(b+C+R) \), where \( R \) is as in Lemma \ref{lem:basics}, and consider the geodesic subsurface $Z$ bounded by the 
geodesics $\gamma_{-m}$ and $\gamma_{m}$.
We set \( A \) to be the area of \( Z \).

Let \( f_n \in \mathcal F_k \) be as defined as in the proof of Lemma \ref{lem:special curves}.
We claim that $Y_n \subset f_n(Z)$ for all  $n \in \mathbb{Z}$.
Before proving our claim, note that the area of \( f_n(Z) \) is bounded above independent of \( n \in \mathbb{Z} \)---the bound only depends on \( K \) and the area of \( Z \).
To see this, let \( Z_n \) denote the geodesic straightening of \( f_n(Z) \), so that the area of \( Z_n \) agrees with that of \( Z \).
It must be that \( f_n(Z) \) is in the \( R \)-neighborhood of \( Z_n \).
The area of the \( R \)-neighborhood of \( Z_n \) is bounded above by the area of \( Z_n \) and \( R \).
As \( R \) only depends on \( K \), we see that the area of \( f_n(Z) \) is bounded as a function of \( K \) and the area of \( Z \). 

Now to prove the claim, first note that using (\ref{eq: dist between geodesics})  we have:

\begin{equation}
\label{eq:Z}
\rho(\gamma_n, f_n(\gamma_{m})) \geq \rho(f_n(\gamma_0),f_n(\gamma_m)) - R
\geq \frac{1}{K}\rho(\gamma_0,\gamma_m)-C-R
\geq  \frac{ma}{K}-C-R > b,
\end{equation}
where the last inequality follows from replacing \( m \) with the assumed lower bound in the choice of \( m \).

On the other hand, $H(\gamma_n, \gamma_{n+1}) < b$ and thus,
$\rho(\gamma_n, f_n(\gamma_m)) >  H(\gamma_n, \gamma_{n+1})$; in particular, \( f_n(\gamma_m ) \) must be disjoint from \( Y_n \). 
Observe that \eqref{eq:Z} holds with \( m \) replaced by \( -m \); hence, \( Y_n \) and \( f_n(\gamma_{-m}) \) are disjoint.
Thus  $Y_n \subset f_n (Z)$ and  hence the area of \( Y_n \) is less than \( A \).
\end{proof}

\begin{rem} Note that  \( m \) can be explicitly chosen to be a function of \( K \) and \( L \).  
\end{rem}

A Bers pants decomposition of each \( Y_n \) together with 
$\{\gamma_n\}_{n\in\mathbb Z}$ yields a pants decomposition for \( X \) with bounded cuff lengths, establishing: 

\begin{prop}
\label{prop:Shiga}
Every QCH ladder surface admits a Shiga pants decomposition.
\qed
\end{prop}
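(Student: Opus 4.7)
The plan is to combine the two preceding lemmas with the Bers pants decomposition theorem. Let $\{\gamma_n\}_{n\in\mathbb{Z}}$ be the sequence of pairwise-disjoint simple closed geodesics produced by Lemma~\ref{lem:special curves}, each separating the two ends of $X$ and each of length at most $KL$. For each $n\in\mathbb{Z}$, let $Y_n$ denote the compact subsurface of $X$ co-bounded by $\gamma_n$ and $\gamma_{n+1}$.

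The first step is to confirm that the family $\{Y_n\}_{n\in\mathbb{Z}}$ exhausts $X$. Because each $\gamma_n$ separates the two ends of $X$ and $\rho(\gamma_0,\gamma_n)\to\infty$ as $|n|\to\infty$ (by the lower bound in Lemma~\ref{lem:special curves}), the sequence $\{\gamma_n\}_{n\geq 0}$ must escape to one end and $\{\gamma_n\}_{n\leq 0}$ to the other. Hence every point of $X$ lies in some $Y_n$ (or on some $\gamma_n$), and the $Y_n$ form a combinatorially ladder-like exhaustion.

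Next, each $Y_n$ is a compact hyperbolic surface with totally geodesic boundary $\gamma_n\cup\gamma_{n+1}$, of total boundary length at most $2KL$, and, by Lemma~\ref{lem:area}, of area at most $A$. The Bers pants decomposition theorem then provides a constant $B=B(A,2KL)$ and a pants decomposition of each $Y_n$ with all cuff lengths at most $B$. Taking the union of these pants decompositions with $\{\gamma_n\}_{n\in\mathbb{Z}}$ gives a pants decomposition of $X$ whose cuff lengths are bounded above by $\max\{KL,B\}$, which is the desired Shiga pants decomposition.

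The only substantive content beyond citation is the exhaustion step; everything else is bookkeeping assembly of the preceding lemmas. The effective bound on the cuff lengths depends only on $K$ and $L=\ell_X(\gamma_0)$, except through the constant $A$ from Lemma~\ref{lem:area}, which (as the authors note earlier) is the one place where an explicit bound in $K$ is not produced.
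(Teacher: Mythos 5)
Your proposal is correct and follows essentially the same route as the paper: the authors likewise obtain the Shiga pants decomposition by taking the curves $\{\gamma_n\}$ from Lemma~\ref{lem:special curves}, bounding the area of each complementary piece $Y_n$ via Lemma~\ref{lem:area}, and applying the Bers pants decomposition theorem to each $Y_n$ (the paper states this in one sentence just before the proposition). Your explicit check that the $Y_n$ exhaust $X$ is a small but welcome addition that the paper leaves implicit.
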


\subsection*{Aside: coarse geometry}
We take a short tangent from the proof of Theorem \ref{thm:qch} to discuss the coarse geometry of QCH ladder surfaces.

Note that as \( X \) is \( K \)-QCH, there is a lower bound on the injectivity radius of \( X \) depending only on \( K \) \cite[Theorem 1.1]{BonfertQuasiconformal}.
In particular, the diameter of \( Y_n \) is bounded above independent of \( n \).
Now let \( \beta \) be the distance-minimizing geodesic from Lemma \ref{lem:special curves}.
Define \( r \co X \to \beta \) by sending a point \( x \) of \( X \) to any point \( y \in \beta \) satisfying \( \rho(x,y) = \rho(x,\beta) \). 
It then follows that \( r \) is a quasi-isometry and hence \( \beta \) is a \emph{quasi-retract} of \( X \).
Note that in the proof of Lemma \ref{lem:special curves}, we could have chosen any distance-minimizing curve, establishing:

\begin{prop}
\label{prop:retract}
If \( X \) is a QCH ladder surface, then every distance-minimizing geodesic in \( X \) is a quasi-retract of \( X \).
\qed
\end{prop}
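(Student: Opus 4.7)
The plan is to take the nearest-point projection $r\co X\to\beta$ as the candidate quasi-retract and to verify that it is a quasi-isometry by producing a uniform upper bound $D$ on the distance from any point of $X$ to $\beta$. The aside preceding the proposition already identifies the ingredients: the area bound on $Y_n$ from Lemma~\ref{lem:area}, the lower bound on the injectivity radius of $X$ from \cite[Theorem 1.1]{BonfertQuasiconformal}, and the fact that $\beta$ meets every $Y_n$ (since $\beta$ passes through each $\gamma_n$).

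The first and main step is to turn these inputs into a uniform bound $D=D(K,L)$ on $\mathrm{diam}_X(Y_n)$. Gauss--Bonnet and the area bound control $|\chi(Y_n)|$, so the topological type of $Y_n$ lies in a finite list. Since the boundary $\gamma_n\cup\gamma_{n+1}$ of $Y_n$ has total length at most $2KL$ by Lemma~\ref{lem:special curves}, the Bers pants decomposition theorem yields a pants decomposition of $Y_n$ whose cuff lengths are bounded above by a constant depending only on $K$ and $L$; the injectivity-radius bound on $X$ forces every closed geodesic, and hence every cuff, to have length at least a positive constant as well. Each pair of pants in the decomposition therefore has cuff lengths in a fixed compact subinterval of $(0,\infty)$, so has uniformly bounded intrinsic diameter, and there are only a uniformly bounded number of such pants in $Y_n$; adding these diameters gives the desired bound on the intrinsic, and therefore ambient, diameter of $Y_n$.

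Once the diameter bound is in hand, the rest is routine. For each $x\in X$, a nearest point $r(x)\in\beta$ exists because $\beta$ is closed and proper in $X$, and the bound $\rho(x,\beta)\le D$ follows by choosing $n$ with $x\in Y_n$ and noting that $\beta$ meets $Y_n$. Two applications of the triangle inequality then yield $|\rho(r(x),r(y))-\rho(x,y)|\le 2D$ for all $x,y\in X$, and since $r$ restricts to the identity on $\beta$, it is surjective; thus $r$ is a $(1,2D)$-quasi-isometry and $\beta$ is a quasi-retract of $X$. To obtain the statement for an \emph{arbitrary} distance-minimizing geodesic (not just the one used in Lemma~\ref{lem:special curves}), I would observe that the constructions of Lemmas~\ref{lem:special curves} and~\ref{lem:area} used only the existence of such a geodesic, so they can be re-run starting from any choice; Theorem~\ref{thm:arcs} guarantees that at least one such geodesic exists in the two-ended surface $X$.

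I expect the main obstacle to be the diameter bound on $Y_n$, and within that, propagating the cuff-length lower bound from the ambient injectivity-radius bound, and confirming that a pair of pants whose cuff lengths lie in a fixed compact positive interval has uniformly bounded diameter---a standard but not quite immediate hyperbolic geometry computation. Everything downstream of that bound is triangle-inequality bookkeeping.
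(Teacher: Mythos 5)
Your proposal follows the paper's argument exactly: the paper's justification is the aside immediately preceding the proposition, which combines the injectivity-radius lower bound with the area bound on the \( Y_n \) to get a uniform diameter bound, takes the nearest-point projection onto \( \beta \), and notes that Lemma~\ref{lem:special curves} could have been run starting from any distance-minimizing geodesic. The only difference is that you supply the (standard) details of the diameter bound, which the paper merely asserts; both arguments also share the same unstated reliance on an arbitrary distance-minimizing geodesic in \( X \) having distinct ends, which is needed before it can be fed into the construction of Lemma~\ref{lem:special curves}.
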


\begin{cor}
Every QCH ladder surface is quasi-isometric to \( \mathbb \br \) equipped with the standard Euclidean metric. 
\qed
\end{cor}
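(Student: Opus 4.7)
The plan is to combine Proposition~\ref{prop:retract} with the elementary observation that a distance-minimizing geodesic, equipped with the induced metric, is isometric to \( \mathbb R \) with its standard Euclidean metric. First, since \( X \) is a ladder surface it has (exactly) two topological ends, so Lemma~\ref{lem:two} supplies a unit-speed distance-minimizing geodesic \( \beta \co \mathbb R \to X \). By Proposition~\ref{prop:retract}, the closest-point projection \( r \co X \to \beta(\mathbb R) \) (where one selects any point of \( \beta \) realizing the distance) is a quasi-isometry of \( X \) onto its image.

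Next, I would observe that because \( \beta \) is unit-speed and distance-minimizing we have \( \rho(\beta(s),\beta(t)) = |s-t| \) for every \( s,t \in \mathbb R \). This exhibits \( \beta \) as an isometric embedding of the Euclidean line into \( (X,\rho) \); in particular, the subspace metric \( \beta(\mathbb R) \) inherits from \( X \) coincides with the arc-length parameterization and is isometric to \( (\mathbb R, |\cdot|) \). Composing \( r \) with the inverse of this isometry then produces a quasi-isometry \( X \to \mathbb R \), and since being quasi-isometric is an equivalence relation the corollary follows. I do not anticipate a genuine obstacle here: all the substantive work is packaged into Proposition~\ref{prop:retract}, and the remaining step is just the standard fact that precomposing or postcomposing a quasi-isometry with an isometry yields a quasi-isometry.
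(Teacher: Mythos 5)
Your argument is correct and is exactly the one the paper intends: the corollary is stated with an immediate \(\qedsymbol\) because it follows directly from Proposition~\ref{prop:retract} together with the observation that a unit-speed distance-minimizing geodesic is an isometric embedding of \( (\mathbb R, |\cdot|) \). Nothing is missing, and your explicit justification that \( \rho(\beta(s),\beta(t)) = |s-t| \) makes the quasi-isometry to \( \mathbb R \) transparent.
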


Let us show that Proposition \ref{prop:Shiga} and Proposition \ref{prop:retract} do not characterize the property of being QCH amongst hyperbolic ladder surfaces.

First, we note again that there exist QCH surfaces that do not admit a Shiga pants decomposition.

\begin{prop}
There exists a hyperbolic ladder surface \( R \) and distance-minimizing geodesic \( \beta \) in \( R \) such that \( \beta \) is a quasi-retract of \( R \), \( R \) admits a Shiga pants decomposition, and \( R \) is not QCH.
\end{prop}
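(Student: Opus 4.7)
The plan is to exhibit \( R \) as a hyperbolic ladder with a pants decomposition of uniformly bounded geometry that is not quasiconformally equivalent to any regular cover of a closed hyperbolic surface; the non-QCH conclusion then follows from Theorem~\ref{thm:qch}, while the bounded geometry yields both the Shiga and quasi-retract claims.

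Concretely, fix \( 0 < \ell_0 \leq L_0 \) and construct \( R \) by gluing a bi-infinite chain of hyperbolic two-holed tori along their cuffs, parametrizing the structure by Fenchel-Nielsen data \( (\ell_n, s_n, \tau_n)_{n \in \mathbb{Z}} \), where \( \ell_n \) and \( s_n \) are the lengths of the \( n \)-th gluing cuff and of an interior pants curve in the \( n \)-th rung, and \( \tau_n \) is the twist at the \( n \)-th gluing. Choose \( \ell_n, s_n \in [\ell_0, L_0] \) and \( \tau_n \in [0, L_0] \), but arrange the sequence to be \emph{non-quasi-periodic}: for every integer \( k \neq 0 \) and every \( C > 1 \) there exists some \( n \) such that the triples \( (\ell_{n+k}, s_{n+k}, \tau_{n+k}) \) and \( (\ell_n, s_n, \tau_n) \) fail to \( C \)-approximate one another in the obvious sense. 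Such sequences exist by a standard Baire-category or diagonalization argument.

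The Shiga property is immediate since every cuff has length bounded by \( L_0 \). For the quasi-retract property, the lower bound \( \ell_0 \) on cuff lengths yields, via the collar lemma, a uniform positive lower bound on the injectivity radius of \( R \), and the upper bound \( L_0 \) yields a uniform bound on the diameter of each rung. Consequently, every point of \( R \) lies within uniformly bounded distance of the distance-minimizing geodesic \( \beta \) produced by Lemma~\ref{lem:two}, and the nearest-point projection \( R \to \beta \) is a quasi-isometry.

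Now suppose for contradiction that \( R \) is QCH. By Theorem~\ref{thm:qch}, there is a quasiconformal map \( f \colon R \to R^* \) with \( R^* \) a regular cover of a closed hyperbolic surface; in particular, \( R^* \) carries a cocompact isometric deck action containing a \( \mathbb{Z} \)-subgroup whose generator \( T \) translates along the ladder direction. The conjugates \( T_m := f^{-1} \circ T^m \circ f \) are quasiconformal self-maps of \( R \) with distortion bounded independently of \( m \); by Lemma~\ref{lem:basics}, each \( T_m \) coarsely permutes the Shiga cuffs of \( R \) by a fixed shift \( k_m \) with \( |k_m| \to \infty \), distorting the Fenchel-Nielsen data by a bounded factor \( C \). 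This forces \( (\ell_{n + k_m}, s_{n + k_m}, \tau_{n + k_m}) \) to \( C \)-approximate \( (\ell_n, s_n, \tau_n) \) for all \( n \), contradicting our non-quasi-periodicity requirement. The main obstacle is precisely this last step: one must upgrade the coarse conjugation of \( T \) into a genuine matching of Shiga cuffs with controlled Fenchel-Nielsen comparison, which demands a rigidity-type statement for Shiga pants decompositions of a ladder under quasiconformal self-maps; such a statement should be extractable by refining the arguments of Lemma~\ref{lem:special curves} and Lemma~\ref{lem:area}.
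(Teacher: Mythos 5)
There is a genuine gap here --- in fact two. First, the ``non-quasi-periodicity'' you require of the Fenchel--Nielsen data is unsatisfiable as stated: all of your parameters live in the compact set \( [\ell_0, L_0] \) (resp.\ \( [0,L_0] \)), so once \( C \geq L_0/\ell_0 \) any two triples automatically \( C \)-approximate one another, and the condition ``for every \( k \neq 0 \) and every \( C>1 \) there exists \( n \) with failure'' cannot hold. This matters because the quantifiers in your contradiction run the wrong way: the hypothetical QCH constant \( K \), and hence the comparison constant \( C(K) \) produced by the conjugates \( T_m \), is not under your control, so you would need failure of \( C \)-approximation for \emph{arbitrarily large} \( C \), which a uniformly bounded-geometry ladder cannot provide. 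Second, even granting a repair, the step you yourself flag as ``the main obstacle'' --- that a quasiconformal self-map of bounded distortion coarsely shifts the Shiga cuffs and preserves the Fenchel--Nielsen data (lengths \emph{and twists}) up to bounded comparison --- is asserted, not proved, and is genuinely delicate: Lemma~\ref{lem:basics} controls lengths of geodesics multiplicatively, but gives no control of twist parameters, and the cited machinery (e.g.\ \cite{AlessandriniFenchel}) runs in the opposite direction, from Fenchel--Nielsen bounds to quasiconformal bounds. So the proof as written does not close.

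The paper's proof is far more elementary and sidesteps Theorem~\ref{thm:qch} entirely: take the standard pants decomposition \( \{a_n,b_n,c_n\}_{n\in\mathbb Z} \) of the ladder and choose the hyperbolic structure with \( \ell(a_n)=\ell(b_n)=1 \) and \( \ell(c_n)=|1/n| \), arranged so that the geodesic \( \beta \) running down the ladder is distance-minimizing. All cuffs have length at most \( 1 \), so the decomposition is Shiga, and the nearest-point projection to \( \beta \) is a quasi-isometry; but \( \ell(c_n)\to 0 \) forces the injectivity radius of \( R \) to tend to \( 0 \), while every \( K \)-QCH surface has injectivity radius bounded below by a positive constant depending only on \( K \) \cite[Theorem 1.1]{BonfertQuasiconformal}. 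The key realization you missed is that you should \emph{not} insist on geometry bounded below: letting the systole degenerate is exactly the cheap obstruction to being QCH, whereas your bounded-geometry examples would require a hard (and here unproven) quasi-isometric rigidity statement to rule out.
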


\begin{proof}
Let \( S \) be a topological ladder surface and let \( \{ a_n, b_n, c_n \}_{n\in\mathbb Z} \) be a pants decomposition for \( S \) as in Figure~\ref{fig:pants-decomp2}.
Let \( R \) a hyperbolic surface such that \( \ell(a_n)=\ell(b_n) = 1 \) and \( \ell(c_n) = \left|\frac1n\right| \)  for all \( n \in \mathbb Z \), and such that the geodesic arc in the homotopy class of \( \beta \) is distance minimizing.

\begin{figure}[t]
\centering
\includegraphics{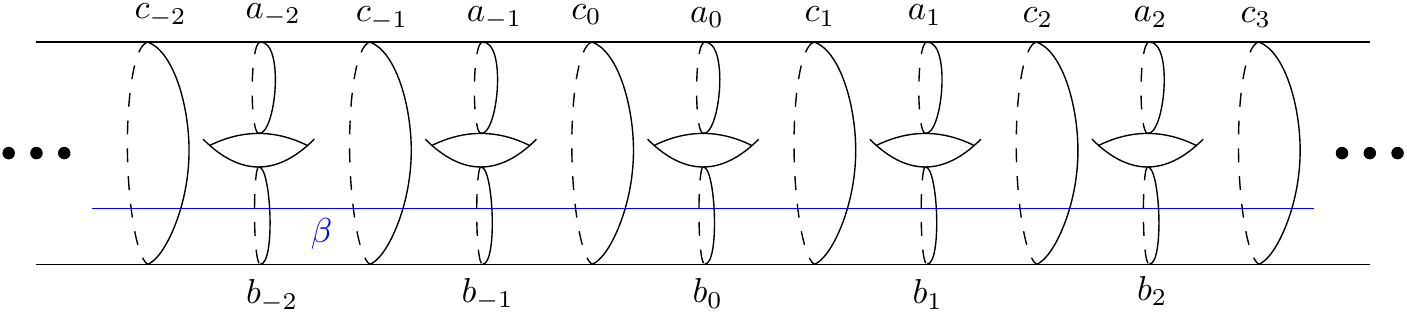}
\caption{A pants decomposition for a (topological) ladder surface.}
\label{fig:pants-decomp2}
\end{figure}

By construction, \( R \) has a Shiga pants decomposition and moreover the nearest point projection \( r \co R \to \beta \) is a quasi-isometry.
However, the injectivity radius of \( R \) goes to 0 and hence \( R \) cannot be QCH.
\end{proof}

\begin{prop}
There exists a hyperbolic ladder surface with a Shiga pants decomposition that is not quasi-isometric to \( \mathbb Z \) (and hence not QCH).
\end{prop}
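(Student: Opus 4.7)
The plan is to exploit the handle direction of the ladder rather than the axial direction: by making the handle curves $a_n$ short (instead of the separating curves $c_n$ as in the previous proposition), the resulting surface contains long thin collars transverse to the axis, which can be used to break the quasi-isometry to $\mathbb Z$.

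I would take the topological ladder surface with pants decomposition $\{a_n, b_n, c_n\}_{n\in\mathbb Z}$ as in Figure~\ref{fig:pants-decomp2}, and define the hyperbolic structure $R$ by setting $\ell(a_n) = 1/|n|$ for $n\neq 0$ and $\ell(a_0) = \ell(b_n) = \ell(c_n) = 1$. Since every cuff length is at most $1$, this is a Shiga pants decomposition. By the collar lemma, for $|n|\geq 1$ the $a_n$-collar is an embedded hyperbolic cylinder with core length $1/|n|$ and total height $2\eta(1/|n|) \sim 2\log(4|n|)$, so $R$ contains a long thin tube of depth $\sim \log|n|$ at each large index.

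To show $R$ is not quasi-isometric to $\mathbb Z$, I would use volume growth as a quasi-isometry invariant. Fix a basepoint $x_0 \in R$ and let $v_R(r)$ be the maximal size of a $1$-separated subset of $B(x_0, r)$. Because each $c_n$ has length $1$, the axial distance from $x_0$ to the $n$-th handle is $O(|n|)$. Along the height axis of the $a_n$-collar, one can place $\Theta(\log|n|)$ pairwise $1$-separated points (spaced unit distance apart in the height direction), all contained in $B(x_0, C|n|)$ for an absolute constant $C$. Summing these contributions yields
\begin{equation*}
v_R(r) \gtrsim \sum_{1\leq |n|\leq r/C} \log|n| = \Theta(r\log r).
\end{equation*}
But $v_{\mathbb Z}(r) = \Theta(r)$, and since growth functions of proper geodesic spaces are equivalent under quasi-isometry while $r\log r$ grows strictly faster than every linear function, $R$ cannot be quasi-isometric to $\mathbb Z$. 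The preceding corollary then gives that $R$ is not QCH.

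The main obstacle is the geometric bookkeeping inside the $a_n$-collar: one must verify that the $\Theta(\log|n|)$ points placed along the collar's height axis remain pairwise $1$-separated when distances are measured in $R$ rather than just inside the collar. This uses convexity of the embedded collar, so that short geodesic segments between these points stay inside it. Once this and the standard quasi-isometry invariance of growth for proper geodesic spaces are in hand, the argument is complete.
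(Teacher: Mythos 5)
Your construction takes a genuinely different route from the paper's. The paper builds its example from a fixed finite list of hyperbolic pieces (\(i\)-holed tori, \(i\in\{1,2,3\}\), all boundary lengths \(1\)) glued according to a two-ended graph \(\Gamma\); the resulting surface is quasi-isometric to \(\Gamma\), and \(\Gamma\) is chosen not to be quasi-isometric to \(\mathbb Z\). That example has injectivity radius bounded below, so it isolates the quasi-isometry type as an obstruction independent of the injectivity-radius obstruction from the preceding proposition (this independence is exactly what motivates the question the paper poses immediately afterwards). Your example instead pinches the handle curves \(a_n\), so its injectivity radius tends to \(0\); it still satisfies the literal statement of the proposition, but it fails to be QCH already for the more basic reason covered by the previous proposition, and the long thin collars that produce your extra volume growth are the same feature that makes the next step delicate.

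That delicate step is the one genuine gap: the assertion that growth functions of \emph{proper geodesic spaces} are quasi-isometry invariants is false as stated. A quasi-isometry only sends a \(1\)-separated set to a \((\frac1A - B)\)-separated set, which is no separation at all once \(B \geq \frac1A\); the standard invariance theorem therefore requires bounded geometry, i.e.\ a uniform bound \(N_D\) on the number of points of a \(1\)-separated set in any ball of radius \(D\). (For instance, \(\mathbb R\) with \(2^n\) unit segments attached at the integer \(n\) is proper, geodesic, and quasi-isometric to \(\mathbb R\), yet has exponential growth of separated nets.) Your surface has injectivity radius tending to \(0\), which is precisely the situation in which bounded geometry is not automatic, so you must verify it: one can do so by noting that \(1\)-separated points in a ball \(B(x,D)\) either lie in the thick part, where an area-packing bound applies, or lie deep in an \(a_n\)-collar, where lifting to the convex preimage of the collar in \(\mathbb D\) shows the points are \(1\)-separated in \(\mathbb D\) and confined to a region of controlled size, giving a bound depending only on \(D\). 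With that verification (and with your separated points chosen at distance at least \(1\) from each collar boundary, so that points in distinct collars are also separated), the comparison \(v_R(r)\gtrsim r\log r\) versus \(v_{\mathbb Z}(r)=\Theta(r)\) does rule out a quasi-isometry to \(\mathbb Z\), and the argument closes. Alternatively, you could avoid growth functions entirely by arguing, as the paper implicitly does, that \(R\) is quasi-isometric to \(\mathbb R\) with an interval of length \(\sim\log|n|\) attached at each integer \(n\), and distinguishing that space from \(\mathbb Z\) directly.
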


\begin{proof}
Let \( \Gamma \) be the two-ended graph shown here:

\begin{center}
\includegraphics[scale=0.9]{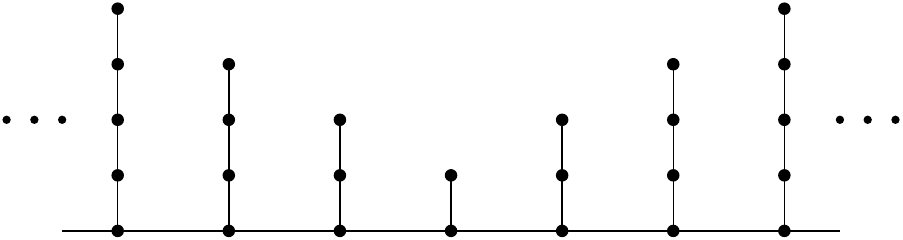}
\end{center}

For \( i \in \{1,2,3\} \), let \( V_i \) be a hyperbolic \( i \)-holed torus with each boundary component of length 1.
Let \( R \) be a hyperbolic surface obtained from \( \Gamma \) by taking a copy of \( V_i \) for each valence \( i \) vertex  and identifying boundary components according to the edge relations in \( \Gamma \).
The resulting surface \( R \) is quasi-isometric to \( \Gamma \); in particular, it is a ladder surface with a Shiga pants decomposition.
However, \( \Gamma \) and hence \( R \) is not quasi-isometric to \( \mathbb Z \).
\end{proof}

These propositions lead us the follow question, which we end the aside with:

\begin{question}
If a hyperbolic ladder surface has  positive injectivity radius and is quasi-isometric to \( \mathbb R \), then is it QCH?
\end{question}

\subsection{Preferred Shiga pants decomposition}
In the previous section, we showed that \( X \) admits a Shiga pants decomposition; however, this is just an existence statement and does not give us enough information to directly construct the desired covering map.
The goal of this subsection is to modify the Shiga pants decomposition from Proposition \ref{prop:Shiga} into a (topological) form we can use to build a deck transformation (the desired form is shown in Figure~\ref{fig:pants-decomp2}).

It is not difficult to show the existence of the desired pants decomposition using a continuity and compactness argument in moduli space; however, it is not possible to extract explicit length bounds from such an argument.
With a little extra effort, we proceed in a fashion allowing for effective constants---this is the content of Lemma~\ref{lem:short-pants}.

\begin{figure}[t]
\includegraphics[scale=0.9]{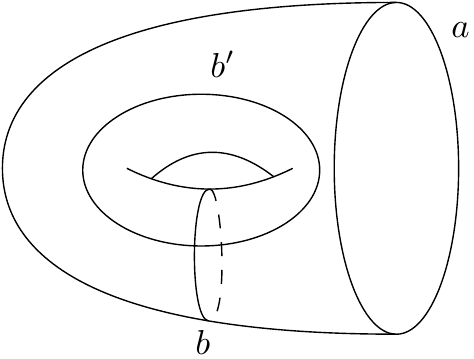}
\caption{The elementary move up to homeomorphism in a once-punctured torus switching \( b \) and \( b' \)}
\label{fig:move1}
\bigskip
\includegraphics[scale=0.9]{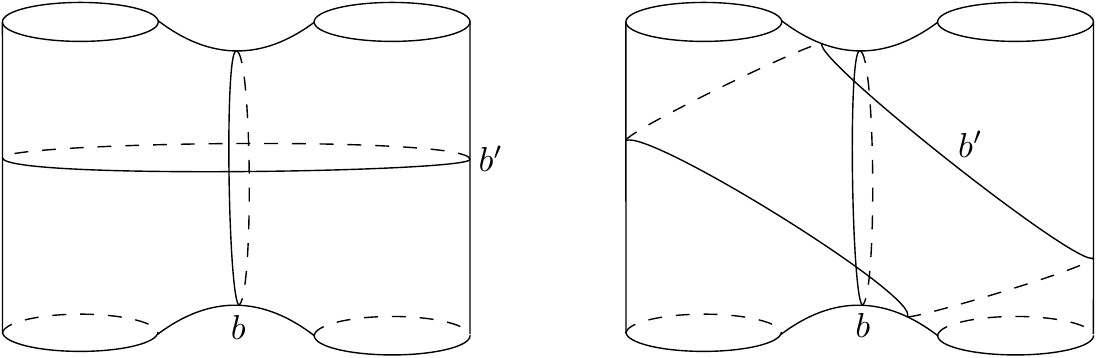}
\caption{The two possible types of elementary moves up to homeomorphism in a 4-holed sphere.}
\label{fig:move23}
\end{figure}

Let \( \Sigma \) be a compact surface with non-abelian fundamental group.
The \emph{pants graph} associated to \( \Sigma \), written \( \mathcal P(\Sigma) \), is the graph whose vertices correspond to pants decompositions of \( \Sigma \) (up to isotopy) and where two vertices are adjacent if they differ by an elementary move.
An elementary move corresponds to removing a single curve \( \alpha \) from the pants decomposition and replacing \( \alpha \) with a curve that is disjoint from all remaining curves of the pants decomposition and intersecting \( \alpha \) minimally (see Figures \ref{fig:move1} and \ref{fig:move23} ). 

Define an equivalence relation \( \sim \) on the vertices of \( \mathcal P(\Sigma) \) by setting two pants decompositions to be equivalent if they differ by a homeomorphism of \( \Sigma \).
The \emph{modular pants graph}, written \( \mathcal{MP}(\Sigma) \), is the graph whose vertices correspond to equivalence classes of pants decompositions of \( \Sigma \); two vertices are connected by an edge if they have representatives in \( \mathcal P (\Sigma) \) that are adjacent.

As \( \mathcal P(\Sigma) \) is connected \cite{HatcherPresentation}, we have that \( \mathcal{MP}(\Sigma) \) is connected; moreover, \( \mathcal{MP}(\Sigma) \) has finitely many vertices and hence finite diameter (in the graph metric). 
Observe that, up to homeomorphism, there are at most two ways to replace a single curve in a given pants decomposition; in particular, none of the edges in the modular pants graph correspond to the elementary move shown in Figure \ref{fig:move1}.

Let \( \sigma \) be a hyperbolic metric on \( \Sigma \).
Given a vertex \( v \in \mathcal {MP}(\Sigma) \), define
\begin{align*}
M_\sigma(v) = \min\{ M : & \text{ there exists } \{c_1, \ldots, c_\xi\} \in \mathcal P(\Sigma)  \text{ such that } \\ & [ \{c_1, \ldots, c_\xi\}] = v \text{ and } \ell_\sigma(c_i) \leq M \text{ for all } i \in \{1, \ldots, \xi\} \},
\end{align*}
where \( \xi = \xi(\Sigma) = 3g-3+b \) is the \emph{topological complexity} of \( \Sigma \) (\( g \) is the genus of \( \Sigma \) and \( b \) the number of boundary components of \( \Sigma \)). 

\begin{lem}
\label{lem:short-pants}
Let \( \Sigma \) be a compact surface possibly with boundary and with \( \xi(\Sigma) > 0 \).  
Let \( \sigma \) be a hyperbolic metric on \( \Sigma \) with injectivity radius \( m \), let \( v \in \mathcal{MP}(\Sigma) \), and let \( M \in \mathbb R \) such that \( M_\sigma(v) \leq M \).
Then, for all  \( w \in \mathcal{MP}(\Sigma) \), \( M_\sigma(w) \) is bounded above by a function of \( \xi, m, \) and \( M \).
\end{lem}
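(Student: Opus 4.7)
The plan is to exploit the finiteness of $\mathcal{MP}(\Sigma)$ to reduce to a single-move estimate, then to bound that estimate using the hyperbolic geometry of a 4-holed sphere.

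First, $\mathcal{MP}(\Sigma)$ is finite with size bounded by a function of $\xi$: a pants decomposition up to homeomorphism is encoded by an (unlabeled) trivalent graph with $|\chi(\Sigma)|$ vertices, of which there are only finitely many. Connectedness of $\mathcal{MP}(\Sigma)$ then yields a diameter bound $D = D(\xi)$, so it suffices to prove a single-move estimate: there is a function $F$ such that whenever $v', w'$ are adjacent in $\mathcal{MP}(\Sigma)$ and $M_\sigma(v') \leq M'$, one has $M_\sigma(w') \leq F(\xi, m, M')$. Iterating $D$ times then produces a bound on $M_\sigma(w)$ depending only on $\xi, m, M$. Separately, a standard packing argument (embedded half-disks of radius $m/2$ centered along $\partial \Sigma$, whose total area cannot exceed the area $2\pi|\chi(\Sigma)|$ of $\Sigma$) bounds each component of $\partial \Sigma$ in length by a function of $\xi$ and $m$.

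For the single-move estimate, fix a representative $\{c_1, \ldots, c_\xi\}$ of $v'$ with all cuffs of length at most $M'$. Since every edge of $\mathcal{MP}(\Sigma)$ arises from a 4-holed sphere elementary move (as noted in the excerpt, the torus move of Figure \ref{fig:move1} collapses to a loop in the modular quotient), $w'$ admits a representative of the form $\{c_1, \ldots, c_{\xi-1}, \alpha'\}$ with $\alpha'$ intersecting $c_\xi$ in exactly two points. The curves $c_\xi$ and $\alpha'$ both lie in the 4-holed sphere $P$ that is a component of $\Sigma \setminus (c_1 \cup \cdots \cup c_{\xi-1})$; combining the cuff bound with the boundary-length bound above, the four boundary components of $P$ have length bounded by a function of $\xi, m, M'$. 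Cut $P$ along $c_\xi$ into two pairs of pants $P_1, P_2$, each with boundary lengths in an interval of the form $[2m, C(\xi, m, M')]$; the lower bound follows from the injectivity radius hypothesis, as all closed geodesics have length at least $2m$. In each pair of pants $P_j$, there is a unique (up to isotopy) non-trivial simple orthogeodesic $\eta_j$ from $c_\xi$ to itself, and the standard right-angled hexagon relations bound $\ell(\eta_j)$ above by an explicit function $E(\xi, m, M')$. Joining the endpoints of $\eta_1$ and $\eta_2$ along $c_\xi$ produces a piecewise-geodesic simple closed curve $\widetilde{\alpha'} \subset P$ with $i(\widetilde{\alpha'}, c_\xi) = 2$ and length at most $2E(\xi, m, M') + M'$. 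There are two ways to connect the endpoints, realizing the two possible topological types of the elementary move; we choose whichever matches the edge $v' \to w'$ in $\mathcal{MP}(\Sigma)$. Any remaining ambiguity by Dehn twists along $c_\xi$ is absorbed into the modular equivalence, so the geodesic representative of $\widetilde{\alpha'}$ provides $M_\sigma(w') \leq F(\xi, m, M') := 2E(\xi, m, M') + M'$.

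The main obstacle is producing the explicit orthogeodesic bound $E(\xi, m, M')$. This is a direct calculation using the right-angled hexagon relations in each pants, where the key point is that with all boundary lengths lying in the bounded interval $[2m, C(\xi, m, M')]$, the factors $\sinh(\ell/2)$ appearing in the denominators of the standard pants length formulas remain bounded away from zero. This yields a finite upper bound on $\ell(\eta_j)$ that can be made explicit as a function of $\xi, m, M'$.
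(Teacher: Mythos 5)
Your proof follows essentially the same route as the paper: reduce to a single elementary move using the connectedness and finite diameter of \( \mathcal{MP}(\Sigma) \), realize the new cuff (up to Dehn twists about the replaced curve) as the union of two simple orthogeodesics from that curve to itself in the adjacent pairs of pants together with arcs of the curve, and bound those orthogeodesics by right-angled polygon trigonometry using the upper bound \( M \) and the lower bound coming from the injectivity radius. The only addition is your packing estimate bounding the components of \( \partial \Sigma \), a point the paper's pentagon formula passes over silently; this is a harmless and arguably welcome extra precaution, not a change of method.
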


\begin{proof}
Let \( P \) be a pair of pants representing \( v \) with cuff lengths all bounded above by \( M \).
If \( \ell \) is the length of any orthogeodesic connecting a boundary component to itself, then 
\[
\sinh(\ell/2) \leq \frac{\cosh(M/2)}{\sinh(m/2)}
\] 
(this follows from standard formulas for right-angled hyperbolic pentagons, see \cite[Formula 2.3.4(1)]{BuserBook}).

Now choose a representative pants decomposition \( \tilde v = \{c_1, \ldots , c_\xi\} \) for \( v \) such that \( \ell_\sigma(c_i) \leq M \) for all \( i \in \{c_1, \ldots, c_\xi\} \).
Let \( w \in \mathcal{MP}(\Sigma) \) be adjacent to \( v \).
Up to relabelling, we can assume that there exists a representative \( \tilde w = \{c_1', c_2, \ldots, c_\xi\} \) of \( w \) adjacent to \( \tilde v \).

Let \( R \) denote the 4-holed sphere component of \( \Sigma \smallsetminus \bigcup_{i=2}^\xi c_i \) and let \( P_1 \) and \( P_2 \) be the two pairs of pants in \( R \) sharing \( c_1 \) as a common boundary component.
Let \( \alpha_1 \) and \( \alpha_2 \) be the orthogeodesics in \( P_1 \) and \( P_2 \), respectively, connecting \( c_1 \) to itself. 
Up to Dehn twisting about \( c_1 \), there exists a curve homotopic to \( c_1' \) obtained by the taking the union of \( \alpha_1, \alpha_2 \), and two subarcs of \( c_1 \).  It follows that
\[
M_\sigma(w) \leq M + \rm{arccosh}\left(\frac{\cosh(M/2)}{\sinh(m/2)} \right).
\]
The result now follows by the fact that \( \mathcal{MP}(\Sigma) \) is connected with finite diameter, which only depends on \( \xi \).
\end{proof}

Let \( S \) be a ladder surface and 
fix a pants decomposition \( \mathcal P = \{a_k, b_k, c_k \}_{k \in \mathbb Z} \) as in Figure \ref{fig:pants-decomp2}.

\begin{lem}
\label{lem:special-shiga}
If \( X \) is a QCH ladder surface, then there exists a homeomorphism \( f \co S \to X \) such that \( f(\mathcal P) \) is a Shiga pants decomposition for \( X \). 
\end{lem}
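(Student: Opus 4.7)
The plan is to start from the Shiga pants decomposition of Proposition~\ref{prop:Shiga} and massage it through length-controlled modifications into a decomposition matching the combinatorial type of \( \mathcal{P} \) in Figure~\ref{fig:pants-decomp2}. The input consists of the separating geodesics \( \{\gamma_n\} \) from Lemma~\ref{lem:special curves}, which cobound subsurfaces \( Y_n \) of uniformly bounded area by Lemma~\ref{lem:area}; combined with the lower bound on the injectivity radius of \( X \) granted by being \( K \)-QCH (cf.~\cite[Theorem 1.1]{BonfertQuasiconformal}) and Gauss--Bonnet, this yields a uniform upper bound on the topological complexity \( \xi(Y_n) \). In particular, the \( Y_n \) realize only finitely many topological types.

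Next, since \( X \) has infinite genus accumulating at both ends, I would apply a bi-infinite pigeonhole argument on the genera of the \( Y_n \) to extract a subsequence \( \{\gamma_{n_k}\}_{k \in \mathbb{Z}} \) with uniformly bounded gaps \( n_{k+1} - n_k \) such that each subsurface \( Z_k \) cobounded by \( \gamma_{n_k} \) and \( \gamma_{n_{k+1}} \) has a fixed topological type of positive genus. To then match the rung type of \( \mathcal{P} \), I would further subdivide each \( Z_k \) by introducing additional separating simple closed geodesics extracted from a Bers pants decomposition of \( Z_k \); since \( Z_k \) has uniformly bounded boundary length, area, complexity, and injectivity radius bounded below, such separating curves exist with uniformly bounded length.

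Finally, each resulting building block is a compact hyperbolic surface homeomorphic to a rung of \( \mathcal{P} \), with boundary length, area, complexity, and injectivity radius uniformly controlled in terms of \( K \) and \( L \). Bers's theorem yields a pants decomposition of each block with uniformly bounded cuff lengths, and Lemma~\ref{lem:short-pants} — invoking that the modular pants graph of a fixed compact surface has finite diameter — converts that decomposition into one whose topological type matches \( \mathcal{P} \) on the rung, still with uniformly bounded cuff lengths. Gluing these blocks along the selected separating geodesics produces a Shiga pants decomposition of \( X \) whose dual graph is isomorphic to that of \( \mathcal{P} \) on \( S \); a standard piece-by-piece construction, extending homeomorphisms of pairs of pants across their common cuffs, then delivers the required homeomorphism \( f\co S \to X \). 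The main obstacle is the second step: arranging that every building block \( Z_k \) shares the rung topology of \( \mathcal{P} \) while keeping the gaps bounded so that the complexity of each block remains controlled. This step relies crucially on the lower injectivity-radius bound from QCH together with the bi-infinite accumulation of genus forced by \( X \) being a ladder surface.
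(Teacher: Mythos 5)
Your overall strategy is the paper's: use the bounded area of the pieces \( Y_n \) (Lemma~\ref{lem:area}) together with the injectivity radius lower bound to bound their topological complexity, then invoke Lemma~\ref{lem:short-pants} and the finite diameter of the modular pants graph to replace a bounded-length pants decomposition of each piece by one of the prescribed topological type, still with bounded cuff lengths, and glue. However, your second step contains a genuine error. You claim a ``bi-infinite pigeonhole argument'' produces indices \( n_k \) with uniformly bounded gaps such that every block \( Z_k \) cobounded by \( \gamma_{n_k} \) and \( \gamma_{n_{k+1}} \) has a single \emph{fixed} topological type. This is false in general: if the genera of the \( Y_n \) are, say, equal to \( 2 \) except for isolated values of \( 3 \) occurring at positions whose gaps grow without bound, then any block sum equal to a fixed constant \( c \) forces \( c \) even (from the long all-genus-\( 2 \) stretches) and hence forces each block to contain an even number of the genus-\( 3 \) pieces; a block containing two of them has unbounded width. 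So bounded gaps and a single fixed type cannot be achieved simultaneously.

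Fortunately this step is also unnecessary, and the rest of your argument survives without it: since the \( Y_n \) realize only finitely many topological types, you may simply take the maximum of the bounds from Lemma~\ref{lem:short-pants} over those types. This is what the paper does, in a slightly different order: it first fixes a homeomorphism \( f \co S \to X \) with \( f(c_{k_n}) = \gamma_n \) for a suitable subfamily \( \{c_{k_n}\} \) of the rungs of \( \mathcal P \) (possible because each \( Y_n \) has positive, uniformly bounded genus and two boundary components), observes that \( \mathcal P_n = f(\mathcal P) \cap Y_n \) is then a pants decomposition of \( Y_n \) of the desired topological type, and applies Lemma~\ref{lem:short-pants} to each \( Y_n \) to bound \( M_{Y_n}([\mathcal P_n]) \) in terms of \( \xi = \max_n \xi(Y_n) \), the injectivity radius bound \( m(K) \), and the Shiga bound \( M \) from Proposition~\ref{prop:Shiga}; precomposing \( f \) with a homeomorphism of \( S \) realizing the short representatives finishes the proof. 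If you drop the fixed-type extraction and instead treat each \( Y_n \) as its own block (choosing the spacing \( k_{n+1}-k_n \) in \( S \) to match the genus of \( Y_n \)), your argument becomes correct and coincides with the paper's.
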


\begin{proof}
Let \( \{\gamma_n\}_{n\in \mathbb Z} \) be the collection of curves guaranteed by Lemma~\ref{lem:special curves}.
By Lemma~\ref{lem:area}, the complexity of the surface bounded by \( \gamma_n \) and \( \gamma_{n+1} \), denoted \( Y_n \), is bounded.
By Proposition~\ref{prop:Shiga}, this guarantees the existence of a Shiga pants decomposition for \( X \) containing the collection of curves \( \{ \gamma_n\} \); let \( M \) be an upper bound for the lengths of the cuffs in this decomposition.
Before continuing, we recall that there is a lower bound on the injectivity radius of any \( K \)-QCH surface, which only depends on \( K \) \cite[Theorem 1.1]{BonfertQuasiconformal}; so, let \( K > 1 \) such that \( X \) is \( K \)-QCH and let \( m = m(K) \) denote this lower bound.

Fix a homeomorphism \( f \co S \to X \) such that for every \( n \in \mathbb Z \) there exists \( k_n \in \mathbb Z \) with \( f(c_{k_n}) = \gamma_n \).
Then, \( \mathcal P_n = f(\mathcal P) \cap Y_n \) is a pants decomposition of \( Y_n \).
As the \( Y_n \) have bounded area and hence bounded topological complexity, 
\[
\xi = \max\{ \xi(Y_n) \co n \in \mathbb Z \},
\]
is finite.
Therefore, by Lemma \ref{lem:short-pants}, \( M_{Y_n}([\mathcal P_n]) \) is bounded above by a function of \( \xi, m, \) and \( M \).
In particular, by pre-composing \( f \) with a homeomorphism of \( S \), we may assume that \( f(\mathcal P) \) is a Shiga pants decomposition for \( X \).
\end{proof}

\subsection{Proof of Theorem \ref{thm:qch}}

We can now prove every QCH ladder surface is quasiconformally equivalent to a regular cover a closed surface, finishing the proof of Theorem \ref{thm:qch}.
Before giving the proof, we recall a definition from Teichm\"uller theory.

Let \( S \) be a (topological) ladder surface and let \( \mathcal P = \{ a_k,b_k,c_k\}_{k\in\mathbb Z} \) be the pants decomposition of \( S \) given in Figure~\ref{fig:pants-decomp2}.
Given a hyperbolic surface \( Z \) and a homeomorphism \( h \co S \to Z \), define the Fenchel--Nielsen coordinates of the marked surface \( (S,h) \) be  the collection of sextuplets
\[
\mathrm{FN}((S,h)) = \{[\ell_Z(h(a_k)), \theta_{a_k}(h), \ell_Z(h(b_k)), \theta_{b_k}(h), \ell_Z(h(c_k)), \theta_{c_k}(h)] \}_{k\in \mathbb Z},
\]
where \( \theta_{a_k}, \theta_{b_k}, \) and \( \theta_{c_k} \) are the twist parameters associated to the curves \( a_k, b_k, \) and \( c_k \), respectively, with respect to a collection of seams \( \{d_k\}_{k\in \mathbb Z} \) as in Figure \ref{fig:seams}.
The twist parameters are given as an angle (as opposed to an arc length). 

\begin{figure}
\includegraphics[scale=0.9]{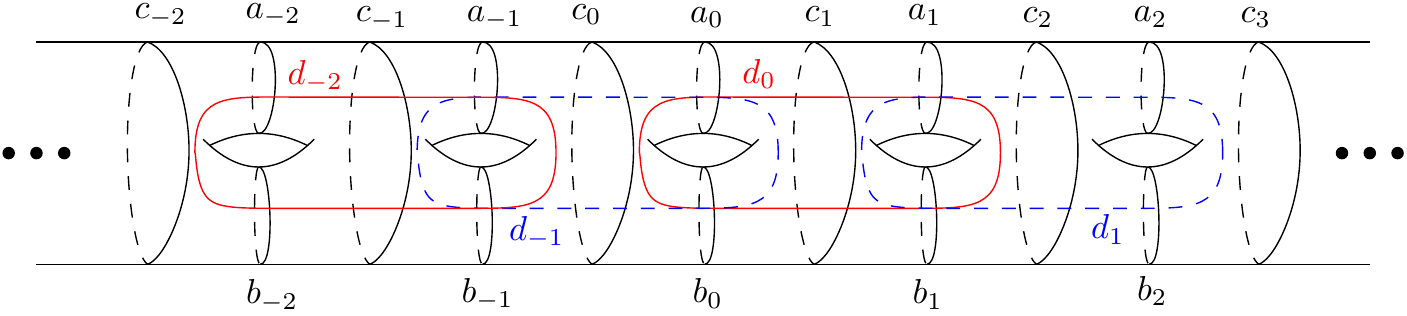}
\caption{The pants decomposition \( \mathcal P \) along with the seams \( \{d_k\}_{k\in \mathbb Z} \) determine Fenchel-Nielsen coordinates for hyperbolic structures on \( S \). }
\label{fig:seams}
\end{figure}

\begin{proof}[Proof of Theorem \ref{thm:qch}]
Let \( X \) be a QCH ladder surface and let \( f \co S \to X \) be the homeomorphism given by Lemma \ref{lem:special-shiga}, so that \( f(\mathcal P) \) is a Shiga pants decomposition for \( X \).
By precomposing \( f \) with a (possibly infinite) product of Dehn twists about the cuffs of \( \mathcal P \), we may assume that the twist parameters for \( X = (S, f) \) are between 0 and \( 2 \pi \).

Now fix the marked surface \( (S, h \co S \to Z) \) such that
\[
\mathrm{FN}((S,h)) = \{ [1,0,1,0,1,0] \}_{k\in \mathbb Z}.
\]
Since the cuff lengths and twist parameters of \( f(\mathcal P) \) are bounded from above and below, we can conclude that the map \( h \circ f^{-1}\co X \to Z \) is quasiconformal \cite[Theorem~8.10]{AlessandriniFenchel}. 

To finish, we show that \( Z \) is a regular cover of a closed  genus-3 hyperbolic surface:
let \( \tau\co S \to S \) be the horizontal translation determined, up to isotopy, by requiring that
\[
\tau((a_k, b_k, c_k, d_k)) = (a_{k+2},b_{k+2}, c_{k+2}, d_{k+2})
\]
for all \( k \in \mathbb Z \).
Observe that 
\[
\mathrm{FN}((S, h\circ \tau^{-1})) = \mathrm{FN}((S,h))
\]
and hence \(\tau^h =  h \circ \tau \circ h^{-1} \co Z \to Z \) is isotopic to an isometry of \( Z \).
It follows that \( \langle \tau^h \rangle \backslash Z \) is a closed hyperbolic genus-3 surface. 
\end{proof}


\section{Topology of QCH surfaces}

The goal of this section is to prove that there are no topologically exotic QCH surfaces, that is:

\begin{thm}
\label{thm:regular}
Every QCH  surface is homeomorphic to a regular cover of a closed surface.
\end{thm}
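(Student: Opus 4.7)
The plan is to show that every QCH Riemann surface \( X \) has topological type matching one of the regular covers of closed surfaces enumerated in Proposition \ref{prop:cover classification}, after which the Classification of Surfaces yields the claim.  Since closed Riemann surfaces are themselves (trivial) regular covers and are known to be QCH, it suffices to treat the non-compact case.  The main technical tool is Lemma \ref{lem:basics}(ii), which states that any \( K \)-quasiconformal self-homeomorphism \( f \) of a hyperbolic Riemann surface is a \( (K, K\log 4) \)-quasi-isometry; in particular, \( \mathrm{diam}(f(C)) \leq K \cdot \mathrm{diam}(C) + K \log 4 \) for every compact \( C \subset X \), uniformly in \( f \).  Combined with QCH transitivity on points, this yields enough topological homogeneity to pin down the end space and genus of \( X \).

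First I would bound the number of ends via the following ``transport of separating subsurfaces'' argument.  Suppose \( X \) has an isolated end \( e \in \mathcal{E}(X) \) together with at least two other ends.  Choose a compact connected subsurface \( C \subset X \) such that \( X \ssm C \) has at least three unbounded components \( U_1, U_2, U_3, \ldots \), where \( U_1 \) is a one-ended neighborhood of \( e \).  Fix \( p \in C \); by \( K \)-QCH, for each \( q \in X \) there exists a \( K \)-quasiconformal \( f_q \co X \to X \) with \( f_q(p) = q \), and \( \mathrm{diam}(f_q(C)) \leq D := K \cdot \mathrm{diam}(C) + K\log 4 \).  Choosing \( q \) deep in \( U_1 \) with \( \rho(q, \partial U_1) > D \), we get \( f_q(C) \subset U_1 \).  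Since \( f_q \) is a homeomorphism, \( X \ssm f_q(C) \) still has at least three unbounded components, so at least two of them lie entirely inside \( U_1 \).  But \( U_1 \) has only one end, so the complement in \( U_1 \) of any compact subset has exactly one unbounded component---a contradiction.  If \( \mathcal{E}(X) \) is finite with at least three ends, then every end is isolated and the argument applies; hence \( |\mathcal{E}(X)| \leq 2 \) or \( \mathcal{E}(X) \) is infinite.  In the infinite case, applying the same argument to any hypothetical isolated end rules out isolated ends, so \( \mathcal{E}(X) \) is a nonempty compact metrizable totally disconnected space without isolated points---a Cantor set by Brouwer's characterization.

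Next I would handle the genus and planarity simultaneously.  Any non-planar end forces \( X \) to have infinite genus, since every neighborhood of such an end contains a handle.  So it remains to show that \( X \) cannot have both a non-planar end and a planar end.  Suppose \( X \) has a compact subsurface \( C \) containing a non-separating simple closed curve \( \alpha \) as well as a planar end \( e' \) with planar neighborhood \( V \).  For \( q \in V \) chosen so that the \( D \)-ball around \( q \) is contained in \( V \), we have \( f_q(C) \subset V \); but then \( f_q(\alpha) \) is a non-separating simple closed curve in the planar surface \( V \), which is impossible.  Thus \( X \) has all ends planar (and genus \( 0 \)) or all ends non-planar (and infinite genus).

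Combining these restrictions, \( X \) is a non-compact orientable surface whose end space lies in \( \{\{*\}, \{*,*\}, \text{Cantor set}\} \) and whose genus is either \( 0 \) with all ends planar, or infinite with all ends non-planar.  By the Classification of Surfaces, \( X \) is homeomorphic to one of the plane, annulus, Cantor tree, Loch Ness monster, ladder, or blooming Cantor tree surface---each of which is a regular cover of a closed surface by Proposition \ref{prop:cover classification}.  The main obstacle is making the transport argument precise: one must choose \( C \) and \( p \) carefully so that the unbounded components of \( X \ssm C \) distribute as advertised, and one must verify that \( f_q(C) \) lands inside the intended end-neighborhood \( U_1 \) without interacting with the separating structure elsewhere.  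The quasi-isometric diameter bound from Lemma \ref{lem:basics}(ii) is what makes this possible.
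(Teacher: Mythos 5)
Your overall strategy---transporting a fixed compact subsurface around \( X \) using the uniform quasi-isometry constant from Lemma \ref{lem:basics}(ii) to constrain the end space and the genus, then invoking the classification of surfaces together with Proposition \ref{prop:cover classification}---is exactly the paper's. Your end-counting step is sound and is essentially the paper's argument with the subsurface \( P \); you phrase the contradiction as ``two unbounded complementary components trapped in a one-ended neighborhood'' where the paper phrases it as ``a bounded complementary component,'' but these are the same bookkeeping.

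The gap is in the genus/planarity step. From \( \alpha \) non-separating in \( X \) you conclude that \( f_q(\alpha) \) is non-separating in \( X \), and you then declare this impossible because \( f_q(\alpha) \) lies in the planar surface \( V \). But a simple closed curve contained in a planar subsurface of \( X \) can perfectly well be non-separating in \( X \): planarity of \( V \) only forces \( f_q(\alpha) \) to separate \( V \), and the two sides of \( f_q(\alpha) \) inside \( V \) may be reconnected through \( X \ssm V \) whenever \( \partial V \) is disconnected and meets both sides. (Concretely, in a once-punctured torus the complement of an annular neighborhood of a non-separating curve \( a \) is a planar neighborhood of the puncture that contains curves isotopic to \( a \).) The implication you actually need---separating in \( V \) implies separating in \( X \)---holds when \( V \) has connected boundary, which you neither arrange nor invoke. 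The fix is either to choose \( V \) with a single boundary circle (always possible for a neighborhood of an end), or, as the paper does, to transport a compact subsurface \( Y \) of positive genus rather than a curve: then \( f_q(Y) \subset V \) is immediately absurd because every subsurface of a planar surface is planar.
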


Before proving Theorem \ref{thm:regular}, we need to understand the topology of a regular cover of a closed surface.
The proposition below is stronger than we require, but with little extra work we state a more complete picture. 
Also, recall that, for the purpose of this article, surfaces are orientable and second countable.

\begin{prop}
\label{prop:cover classification}
A regular cover of a closed surface is either compact or homeomorphic to one of the following six surfaces:

\begin{enumerate}

\item \( \br^2 \),

\item \( \br^2 \ssm  \mathbf 0\),

\item the Cantor tree surface, i.e.~the planar surface whose space of ends is a Cantor space,

\item the blooming Cantor tree surface,  i.e.~the infinite-genus surface with no planar ends and whose space of ends is a Cantor space,

\item the Loch Ness Monster surface, i.e.~the one-ended infinite-genus surface, or

\item the ladder surface, i.e.~the two-ended infinite-genus surface with no planar ends.
 
\end{enumerate}
Moreover, the torus is the only closed surface regularly covered by \( \br^2 \ssm \mathbf 0 \).
\end{prop}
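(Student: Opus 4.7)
The plan is to split the analysis according to the end space of the regular cover $X$ of a closed surface $\Sigma$, exploiting the free, properly discontinuous, cocompact action of the deck transformation group $G$ on $X$. If the cover is finite-sheeted, then $X$ is compact and there is nothing to prove. Otherwise $G$ is infinite, and the \v{S}varc--Milnor lemma gives a quasi-isometry between $X$ and $G$, so $\mathcal{E}(X) \cong \mathcal{E}(G)$; the Hopf--Freudenthal theorem then forces $\mathcal{E}(X)$ to have either $1$, $2$, or a Cantor set of points.

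Two auxiliary dichotomies drive the classification. First, I would show that the genus of $X$ is either $0$ or infinite: if $X$ had finite positive genus $g$, one can pick a compact subsurface $Y \subset X$ of genus $g$ such that each component of $X \setminus Y$ is planar, and then any $\phi \in G$ with $\phi(Y) \cap Y = \emptyset$ would embed $Y$ as a positive-genus subsurface into a planar region, which is absurd; hence every $\phi \in G$ satisfies $\phi(Y) \cap Y \neq \emptyset$, and proper discontinuity forces $G$ finite, a contradiction. Second, the closed $G$-invariant subset $\mathcal{E}_{np}(X) \subseteq \mathcal{E}(X)$ must be either empty or all of $\mathcal{E}(X)$. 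In the two-ended case, I would pass to a finite-index $\mathbb{Z}$-subgroup of $G$, build a cutting-and-stacking decomposition $X = \bigcup_{n \in \mathbb{Z}} t^n F_0$, and observe that the two ends of $X$ admit homeomorphic neighborhoods (both are obtained by one-sidedly gluing infinitely many copies of $F_0$ via $t$). In the infinitely-ended case, I would invoke minimality of the $G$-action on the Cantor set $\mathcal{E}(X)$, which follows from Stallings' structure theorem and the density of fixed points of hyperbolic elements in the boundary of the corresponding Bass--Serre tree.

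Combining these dichotomies, the six possibilities $|\mathcal{E}(X)| \in \{1, 2, \text{Cantor}\}$ paired with $\mathrm{genus}(X) \in \{0, \infty\}$ match the six non-compact surfaces in the statement: $1$ end with genus $0$ gives $\mathbb{R}^2$; $1$ end with genus $\infty$ gives the Loch Ness monster; $2$ ends with genus $0$ gives the annulus $\mathbb{R}^2 \setminus \mathbf{0}$; $2$ ends with genus $\infty$ gives the ladder (both ends being non-planar because infinite genus forces at least one non-planar end, and the invariance dichotomy forces the rest); Cantor ends with genus $0$ gives the Cantor tree; and Cantor ends with genus $\infty$ gives the blooming Cantor tree, by the same end-space reasoning.

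For the moreover assertion, suppose $\mathbb{R}^2 \setminus \mathbf{0}$ regularly covers a closed surface $\Sigma$. The covering map induces a short exact sequence $1 \to \pi_1(\mathbb{R}^2 \setminus \mathbf{0}) \to \pi_1(\Sigma) \to G \to 1$, exhibiting a normal $\mathbb{Z}$ inside $\pi_1(\Sigma)$. Since closed orientable surfaces of genus $\geq 2$ have non-elementary word-hyperbolic fundamental groups---which contain no non-trivial normal cyclic subgroups (otherwise an index-$2$ subgroup would have infinite center, contradicting non-elementary hyperbolicity)---while $\pi_1(S^2)$ is trivial and admits no non-trivial covers, we conclude $\Sigma = T^2$. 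I expect the main obstacle to be the rigorous verification of minimality of the $G$-action on $\mathcal{E}(X)$ in the Cantor case; the finite-genus dichotomy and the two-ended homeomorphism argument should be considerably more routine.
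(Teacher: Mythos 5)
Your overall strategy matches the paper's: Hopf's theorem on the ends of an infinite regular cover (or equivalently, via \v{S}varc--Milnor, of the deck group) gives $1$, $2$, or a Cantor set of ends; a genus/non-planarity dichotomy reduces to six topological types via the classification of surfaces; and the ``moreover'' clause follows because a hyperbolic surface group has no normal cyclic subgroup. Where you diverge is in establishing that every end is non-planar when the cover has genus: the paper handles this (together with the ``genus is $0$ or $\infty$'' dichotomy) by a single displacement argument --- cocompactness lets you translate a fixed compact positive-genus subsurface into any neighborhood of any end, so genus accumulates at every end --- whereas you split into the two-ended and Cantor cases and invoke, respectively, a cutting-and-stacking decomposition and minimality of the deck-group action on the end space via Stallings' theorem and Bass--Serre theory. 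The latter is correct but much heavier than needed, and your two-ended sub-argument has a soft spot: asserting that $\bigcup_{n\ge 0}t^nF_0$ and $\bigcup_{n\le 0}t^nF_0$ are homeomorphic because ``both are obtained by one-sidedly gluing copies of $F_0$'' is not a proof (the obvious piecewise identification does not respect the gluings, and appealing to the classification of surfaces here would be circular, since whether the ends are planar is exactly what is at stake). Note that the displacement argument you already use for the finite-genus dichotomy --- push a genus-carrying compact piece into an arbitrary end neighborhood --- disposes of both cases at once, which is what the paper does; with that substitution your proof is complete and essentially the paper's.
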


\begin{proof}
Let \( B \) be a closed surface and let \( \pi \co S \to B \) be a regular cover.
It is not difficult to show that the end space of a regular cover of a closed manifold is either empty, discrete with 1 or 2 points, or a Cantor space (this is a classical theorem of Hopf \cite{HopfEnden}).
If the end space of \( S \) is empty, then \( S \) is compact; in the other cases, \( S \) is non-compact.
If \( S \) is non-planar, then the co-compactness of the action of the deck group associated to \( \pi \) on \( S \) will guarantee that every end of \( S \) is non-planar. 
Therefore, either \( S \) is compact; \( S \) is planar with 1, 2, or a Cantor space of ends; or \( S \) is infinite genus with 1, 2, or a Cantor space of ends, all of which are non-planar.
Using the classification of surfaces, we see that---up to homeomorphism---there are only six non-compact surfaces that meet these criteria, namely the ones listed above.
We leave it as an exercise to show that, with the exception of \( \br^2 \ssm \mathbf 0 \), each of the listed surfaces covers a closed genus-2 surface.

Finally,  \( \br^2 \ssm \mathbf 0 \) is a regular cover of the torus; moreover, no surface of genus at least two can be regularly covered by \( \br^2 \ssm \mathbf 0 \): indeed, \( \pi_1( \br^2) \ssm \mathbf 0 \) is cyclic and the fundamental group of a hyperbolic surface cannot have a normal cyclic subgroup. 
\end{proof}

We can now prove Theorem \ref{thm:regular}.

\begin{proof}[Proof of Theorem \ref{thm:regular}]
Let \( X \) be a \( K \)-QCH surface.
Further, for the sake of arguing by contradiction, assume that \( X \) is not a regular cover of a closed surface. 
If \( X \) is closed, then it is trivially a regular cover of a closed surface, namely itself.
So, we may assume that \( X \) is non-compact.
Note that under these assumptions, \( X \) is necessarily hyperbolic. 

First, assume that \( X \) has positive (possibly infinite) genus and has at least one planar end.
We can then choose a non-planar compact subsurface \( Y \) of \( X \) and  an unbounded planar subsurface \( U \) of \( X \) such that \( \partial U \) is compact. 
Let \( \{x_n\}_{n\in\bn} \) be a  sequence in \( U \) such that every compact subset of \( X \) contains only finitely many of the \( x_n \). 
Fix \( x \in Y \) and let \( f_n \co X \to X \) be a \( K \)-quasiconformal map such that \( f_n(x) = x_n \). 
Note that as \( f_n \) is a \( (K, K\log4) \)-quasi-isometry (see Lemma \ref{lem:basics}), the diameter of \( f_n(Y) \) is bounded as a function of \( K \) and the diameter of \( Y \). 
In particular, as \( \rho(x_n, \partial Y) \to \infty \) as \( n \to \infty \), it must be that \( f_n(Y) \subset U \) for large \( n \), but this is impossible as every subsurface of a planar surface is planar.

We can now conclude that \( X \) is either (i) planar or (ii) has infinite genus and no planar ends. 
In either case, as we are assuming \( X \) is not a regular cover of a closed surface, by Proposition \ref{prop:cover classification}, \( X \) has at least three ends, one of which is isolated, call it \( e \).
(Note: if the end space does not contain an isolated point, then it is necessarily a Cantor space.)
Let \( P \) be a compact subsurface in \( X \) with three boundary components, each of which is separating, and such that each component of \( X \ssm P \) is unbounded and such that there exists a component \( U \) of \( X\ssm P \) with \( U \)  a neighborhood of \( e \). 
The argument now proceeds nearly identically to the previous case.
Let \( \{x_n\}_{n\in\bn} \) be a  sequence in \( U \) such that every compact subset of \( X \) contains only finitely many of the \( x_n \). 
Fix \( x \in P \) and let \( f_n \co X \to X \) be a \( K \)-quasiconformal map such that \( f_n(x) = x_n \). 
Again, as \( f_n \) is a \( (K, K\log4) \)-quasi-isometry, the diameter of \( f_n(P) \) is bounded as a function of \( K \) and the diameter of \( P \). 
In particular,  \( \rho(x_n, P) \to \infty \) as \( n \to \infty \), it must be that \( f_n(P) \subset U \) for large \( n \), but this is impossible as it would require \( X \ssm f_n(P) \) to have a bounded component. 
\end{proof}

\bibliographystyle{plain}
\bibliography{references}

\end{document}